\numberwithin{equation}{section}
\newtheorem{propn}{Proposition}[section]
\newtheorem{thm}[propn]{Theorem}
\newtheorem{lemma}[propn]{Lemma}
\newtheorem{cor}[propn]{Corollary}
\newtheorem*{thm*}{Theorem}
\theoremstyle{definition}
\newtheorem{defn}[propn]{Definition}
\newtheorem*{defn*}{Definition}
\newtheorem*{example}{Example}
\newtheorem{rem}{Remark}[section]
\newtheorem*{question}{Question}
\newcommand{\Nat}{\mathbb{N}}
 \newcommand{\D}{\mathbb{D}}
  \newcommand{\T}{\mathbb{T}}
 \DeclareMathOperator{\ran}{ran}
\newcommand{\clb}{\mathcal{B}}
\newcommand{\cle}{\mathcal{E}}
\newcommand{\clf}{\mathcal{F}}
\newcommand{\clh}{\mathcal{H}}
\newcommand{\clk}{\mathcal{K}}
\newcommand{\clm}{\mathcal{M}}
\newcommand{\cls}{\mathcal{S}}
\newcommand{\raro}{\rightarrow}
\begin{document}
\title[Unitary part of Toeplitz operators]{Unitary parts of Toeplitz operators with operator-valued symbols}


\author[Narayanan]{E. K. Narayanan}
\address{Department of Mathematics, Indian Institute of Science, Bangalore, 560012, India}
\email{naru@iisc.ac.in,
ek.narayanan@gmail.com}

\author[Sarkar]{Srijan Sarkar}
\address{Department of Mathematics, Indian Institute of Science, Bangalore, 560012, India}
\email{srijans@iisc.ac.in,
srijansarkar@gmail.com}

\begin{abstract}
Motivated by the canonical decomposition of contractions on Hilbert spaces, we investigate when contractive Toeplitz operators on vector-valued Hardy spaces on the unit disc admit a non-zero reducing subspace on which its restriction is a unitary. We show that for a Hilbert space $\mathcal{E}$ and operator-valued symbol $\Phi \in L_{\mathcal{B}(\mathcal{E})}^{\infty}(\mathbb{T})$, the Toeplitz operator $T_{\Phi}$ on $H_{\mathcal{E}}^2(\D)$ has such a unitary subspace if and only if there exists a Hilbert space $\mathcal{F}$, an inner function $\Theta(z) \in  H_{\mathcal{B}(\mathcal{F}, \mathcal{E})}^{\infty}(\D)$, and a unitary $U:\mathcal{F} \rightarrow \mathcal{F}$ such that
\[
\Phi(e^{it}) \Theta(e^{it}) = \Theta(e^{it}) U \quad \text{and} \quad  \Phi(e^{it})^* \Theta(e^{it}) = \Theta(e^{it}) U^* \quad (\text{ a.e. on }\mathbb{T}).
\]
This result can be seen as a generalization of the corresponding result for Toeplitz operators on $H^2(\mathbb{D})$ by Goor in \cite{Goor}.  We provide finer characterizations for analytic Toeplitz operators by finding the correspondence between the unitary parts of $T_{\Phi}$ on $H_{\mathcal{E}}^2(\mathbb{D})$ and $\Phi(0)$ on $\mathcal{E}$. 
\end{abstract}

\maketitle
\section{Introduction}
This article is concerned with finding the unitary parts of contractive Toeplitz operators with operator-valued symbols on vector-valued Hardy spaces.  For any Hilbert space $\cle$, the $\cle$-valued Hardy space on the unit disc $\D$ in the complex plane is defined by
\[
H_{\cle}^2(\D) := \{f = \sum_{k=0}^{\infty} a_n z^n \in \mathcal{O}(\D,\cle): a_n \in \cle, \|f\|:= \sum_{n=0}^{\infty} \|a_n\|_{\cle}^2 < \infty\}.
\]
The space of operator-valued bounded analytic functions on $\D$ is denoted by $H_{\clb(\cle)}^{\infty}(\D)$, where $\clb(\cle)$ denotes the class of all bounded operators on $\cle$. Let us also recall that $T \in \clb(\clh)$ is said to be a \textit{contraction} if $\|T\| \leq 1$.
Toeplitz operators are defined in the following manner.
\begin{defn}
A bounded linear operator $T$ on $H_{\cle}^2(\D)$ is said to be Toeplitz if there exists an operator-valued function $\Phi \in L_{\clb(\cle)}^{\infty}(\mathbb{T})$ such that $T = P_{H_{\cle}^2(\D)} L_{\Phi}|_{H_{\cle}^2(\D)},$ where $L_{\Phi}$ is the Laurent operator (multiplication operator) associated to $\Phi.$ In this case, $T$ is denoted by $T_{\Phi}$, and the function $\Phi$ is called the symbol of the operator $T_{\Phi}.$
\end{defn}
A starting point of several studies on operators involves finding an orthogonal decomposition of the Hilbert space into unitary and non-unitary parts. Following the celebrated monograph \cite{NF}, we define a completely non-unitary operator as the following:
\begin{defn}
$T \in \clb(\clh)$ is said to be completely non-unitary (c.n.u.) if there does not exist a $T$-reducing subspace $\clm \subseteq \clh$ such that $T|_{\clm}$ is a unitary.
\end{defn}
Contractions on Hilbert spaces admit a canonical orthogonal decomposition into unitary and c.n.u. parts as can be seen in the following result \cite{NF}.

\begin{thm}\label{contdeco}
Let $T$ be a contraction on $\clh$, then $ \clh = \clh_U \oplus \clh_{\text{c.n.u.}}$, where $\clh_U,\clh_{\text{c.n.u.}} \subseteq \clh$ are $T$-reducing closed subspaces, and therefore
\[
T = T|_{\clh_U} + T|_{\clh_{\text{c.n.u.}}}.
\]
Moreover, $T|_{\clh_U}$ is unitary and $T|_{\clh_{\text{c.n.u.}}}$ is  completely non-unitary.
\end{thm}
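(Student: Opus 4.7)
The plan is to construct $\clh_U$ explicitly as the set of vectors on which both $T$ and $T^*$ act isometrically on the entire forward orbit, show that this set is already a $T$-reducing closed subspace on which $T$ restricts to a unitary, and then argue that its orthogonal complement inherits the c.n.u.\ property by maximality.

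First I would introduce the defect operators $\sfd_T := (I - T^*T)^{1/2}$ and $\sfd_{T^*} := (I - TT^*)^{1/2}$, which make sense because $T$ is a contraction. The identity $\|Th\|^2 = \|h\|^2 - \|\sfd_T h\|^2$ shows that $\|Th\| = \|h\|$ iff $\sfd_T h = 0$, and iterating gives $\|T^n h\| = \|h\|$ for every $n\ge 0$ iff $T^k h \in \ker \sfd_T$ for every $k \ge 0$. This lets me set
\[
\clh_U := \Bigl(\bigcap_{k \geq 0} \ker(\sfd_T T^k)\Bigr) \cap \Bigl(\bigcap_{k \geq 0} \ker(\sfd_{T^*} T^{*k})\Bigr),
\]
which is visibly a closed linear subspace of $\clh$ and consists precisely of those $h$ with $\|T^n h\| = \|T^{*n} h\| = \|h\|$ for every $n \ge 0$.

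Next I would verify that $\clh_U$ reduces $T$. For $T$-invariance, if $h \in \clh_U$ then $\|T^n(Th)\| = \|T^{n+1} h\| = \|h\| = \|Th\|$, and since $\|Th\| = \|h\|$ forces $T^*T h = h$ (by positivity of $I - T^*T$), I obtain $\|T^{*n}(Th)\| = \|T^{*(n-1)} h\| = \|h\| = \|Th\|$ for $n \ge 1$, so $Th \in \clh_U$. The symmetric argument with $T$ and $T^*$ interchanged shows that $\clh_U$ is also $T^*$-invariant, hence reducing. On $\clh_U$ both $T|_{\clh_U}$ and its adjoint $T^*|_{\clh_U}$ are isometric by construction, so $T|_{\clh_U}$ is a unitary.

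Finally, setting $\clh_{\text{c.n.u.}} := \clh_U^\perp$, this orthogonal complement is automatically $T$-reducing. To see that $T|_{\clh_{\text{c.n.u.}}}$ is completely non-unitary, I would argue by maximality of $\clh_U$: if $\cln \subseteq \clh_{\text{c.n.u.}}$ were a reducing subspace on which $T|_{\cln}$ is unitary, then every $h \in \cln$ would satisfy $\|T^n h\| = \|T^{*n} h\| = \|h\|$ for every $n$, hence $\cln \subseteq \clh_U$, forcing $\cln \subseteq \clh_U \cap \clh_U^\perp = \{0\}$. The only real subtlety in this plan is the implication $\|Th\| = \|h\| \Raro T^*Th = h$ used in checking invariance, which relies on positivity of $I - T^*T$ rather than the scalar norm equality alone; once this observation is in hand, everything else reduces to bookkeeping on forward orbits.
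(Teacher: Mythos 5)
Your argument is correct and is essentially the classical Sz.--Nagy--Foias proof that the paper itself does not reproduce but simply cites from \cite{NF}: one takes $\clh_U$ to be the set of vectors on which all powers of $T$ and $T^*$ act isometrically, checks via $\|Th\|=\|h\|\Rightarrow T^*Th=h$ that this closed subspace reduces $T$ and carries a unitary restriction, and gets complete non-unitarity on the complement by maximality. No gaps; the one subtle point (passing from the norm identity to the operator identity $T^*Th=h$ via positivity of $I-T^*T$) is exactly the one you flag and handle.
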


From its inception, c.n.u. contractions have played a major role in classifying contractions. The most striking result is the explicit functional model for c.n.u. contractions established by Nagy and Foias (\cite{NF}). We refer the reader to the articles \cite{Bercovici, BT, Das, Levan, MT, Wu} for several interesting aspects of c.n.u. contractions on Hilbert spaces.  Thus, it is a central question in operator theory to decide: \textit{when does a contraction become completely non-unitary?} It is evident from Theorem \ref{contdeco}, that the most natural approach to finding conditions for c.n.u. contractions depend on the understanding of the unitary subspace.  Motivated by this, let us put the following definition. 

\begin{defn*}
$T \in \clb(\clh)$ is said to have a \textit{unitary part} if there exists a non-zero $T$-reducing subspace $\clm_T \subseteq \clh$ such that $T|_{\clm_T}$ is unitary.
\end{defn*}

From the advent of the theory of Toeplitz operators following the work of Brown and Halmos in \cite{BH}, it is an intriguing and challenging question to find the correspondence between operator theoretic properties of $T_{\Phi}$ and function-theoretic properties of $\Phi$.  The understanding of this interplay is far from complete for the case of Toeplitz operators with operator-valued symbols as many natural questions remain unanswered. This motivates us to raise the following question.
\[
\textit{ Which Toeplitz operators on Hardy spaces on the unit disc admit a unitary part?}
\]
Goor answered this question (\cite{Goor}) for Toeplitz operators on $H^2(\D)$ by proving the following surprising result: \textit{any non-constant contractive $T_{\phi}$ is always completely non-unitary.}  Goor's result can be stated in the following equivalent manner.
\begin{thm*}[Goor]
Any Toeplitz operator $T_{\phi}$ on $H^2(\D)$ with a non-constant symbol $\phi$ and $\|T_{\phi}\|\leq 1$, does not admit a unitary part.
\end{thm*}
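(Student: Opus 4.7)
The plan is to argue by contradiction. Suppose $\phi \in L^\infty(\T)$ is non-constant, $\|T_\phi\| \le 1$, and $\clm \subseteq H^2(\D)$ is a nonzero $T_\phi$-reducing subspace on which $T_\phi$ is unitary. The first task is to extract rigid information about $\phi$ on $\clm$ from the assumed unitarity. For any $f \in \clm$, the chain
\[
\|f\|^2 = \|T_\phi f\|^2 = \|P_{H^2}(\phi f)\|^2 \le \|\phi f\|_{L^2}^2 = \int_{\T} |\phi|^2 |f|^2 \, dm \le \|f\|^2
\]
must consist of equalities throughout, which forces $|\phi|=1$ a.e.\ on $\{f \neq 0\}$ together with $\phi f \in H^2$. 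The same argument applied to $T_\phi^* = T_{\bar\phi}$ yields $\bar\phi f \in H^2$. Since a nonzero element of $H^2$ can vanish only on a Lebesgue null set, selecting any $f \in \clm \setminus \{0\}$ upgrades this to $|\phi|=1$ a.e.\ on $\T$ and shows that on $\clm$ the operators $T_\phi$ and $T_\phi^*$ act as the unqualified multiplications $M_\phi$ and $M_{\bar\phi}$, with no Riesz projection needed.

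The next step is to reinterpret $\clm$ inside the ambient space $L^2(\T)$: it is a closed subspace invariant under the unitary $M_\phi \in \clb(L^2(\T))$ and under its adjoint $M_{\bar\phi}$, hence a reducing subspace for the unitary $M_\phi$. By the spectral theorem, the orthogonal projection $P_\clm$ commutes with every element of the von Neumann algebra $W^*(M_\phi)$. Crucially, this algebra contains the spectral projections $M_{\chi_{\phi^{-1}(V)}}$ for every Borel set $V \subseteq \T$, so multiplication by the characteristic function $\chi_{\phi^{-1}(V)}$ sends $\clm$ into $\clm$ for every such $V$.

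Finally, exploit the hypothesis that $\phi$ is non-constant. Its essential range $E \subseteq \T$ contains at least two distinct points, so one can choose a Borel $V \subseteq E$ with both $F := \phi^{-1}(V)$ and $\T \setminus F$ of positive Lebesgue measure. For any $f \in \clm \setminus \{0\}$, the product $\chi_F f$ then lies in $\clm \subseteq H^2$ while vanishing on the positive-measure set $\T \setminus F$; since a nonzero $H^2$ function cannot vanish on a set of positive measure, $\chi_F f \equiv 0$. Thus $f$ itself vanishes on $F$, which has positive measure, forcing $f \equiv 0$ and producing the desired contradiction. The genuinely delicate step in this outline is the first one: one must recognise that the combined norm equalities for $T_\phi$ and $T_\phi^*$ on $\clm$ force $T_\phi|_\clm$ to coincide with multiplication by $\phi$ on $L^2(\T)$, so that $\clm$ upgrades from a $T_\phi$-reducing subspace of $H^2$ to a reducing subspace for a bona fide unitary on $L^2(\T)$. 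Once that transition is in hand, the spectral calculus for $M_\phi$ together with the rigidity of $H^2$ against proper characteristic multipliers closes the argument essentially for free.
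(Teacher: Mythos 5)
Your proof is correct, but it follows a genuinely different route from the paper's. The paper obtains this statement as a corollary of its main operator-valued theorem: the unitary part, being $M_z$-invariant, is a Beurling subspace $\theta H^2(\D)$, the uniqueness part of Beurling--Lax--Halmos produces a constant unimodular $\mu$ with $\phi\theta=\mu\theta$ a.e., and since an inner $\theta$ satisfies $|\theta|=1$ a.e.\ one divides to get $\phi=\mu$ a.e., contradicting non-constancy. You instead give a self-contained argument: the norm equalities for $T_\phi$ and $T_\phi^*$ on the putative unitary part $\clm$ force $|\phi|=1$ a.e.\ (using $\|T_\phi\|=\|\phi\|_\infty$, which the paper also invokes) and force $\phi f,\bar\phi f\in H^2(\D)$, so $\clm$ reduces the unitary $M_\phi$ on $L^2(\T)$; then $P_{\clm}$ commutes with the spectral projections $M_{\chi_{\phi^{-1}(V)}}$, and since $\phi$ non-constant yields a set $F=\phi^{-1}(V)$ with $F$ and $\T\setminus F$ both of positive measure, $\chi_F f\in H^2(\D)$ vanishes on a set of positive measure and hence is zero, killing $\clm$. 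All steps check out (the equality case of the projection inequality, the a.e.\ nonvanishing of nonzero $H^2$ functions, and the identification of the spectral measure of $M_\phi$ are all standard). What each approach buys: the paper's derivation is essentially free once the vector-valued machinery is in place and illustrates why the scalar case is rigid (a scalar inner function is pointwise invertible on $\T$, unlike an operator-valued one), while yours is elementary, avoids Beurling--Lax--Halmos entirely, and is in fact closer in spirit to Goor's original argument, exposing directly that a unitary part would have to be cut by characteristic functions, which $H^2$ forbids.
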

A natural question is to seek analogous results for Toeplitz operators with operator-valued symbols on vector-valued Hardy spaces.  A fundamental challenge that lies for operator-valued symbols is that, unlike the scalar-valued cases, we no longer have the commutativity of the symbols.  This is a reason why we need new approaches to tackle the general problem that often brings out a better understanding of the connection between a Toeplitz operator and its symbol. In this article, we have achieved the following result which gives a complete characterization for Toeplitz operators admitting a unitary part in $H_{\cle}^2(\D)$.
\begin{thm}
Let $T_{\Phi}$ be a contractive Toeplitz operator on $H_{\cle}^2(\D)$.  Then the following are equivalent
\begin{enumerate}
\item[(i)] $T_{\Phi}$ has a unitary part, say $\clm_{T_{\Phi}}$,
\item[(ii)] there exists a Hilbert space $\clf$, an inner function $\Theta(z) \in H_{\clb(\clf,\cle)}^{\infty}(\D)$ and a unitary $U: \clf \raro \clf$ such that
\[
\Phi(e^{it}) \Theta(e^{it}) = \Theta(e^{it}) U \quad \text{and} \quad  \Phi(e^{it})^* \Theta(e^{it}) = \Theta(e^{it}) U^* \quad (\text{ a.e. on }\T).
\]
In this case $T_{\Phi}|_{\clm_{T_{\Phi}}}$ is unitarily equivalent to a constant unitary.
\end{enumerate} 
\end{thm}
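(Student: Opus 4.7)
The plan is to prove the two implications separately. For (ii)$\Rightarrow$(i), set $\clm_0 := \{\Theta(\cdot)c : c \in \clf\} \subseteq H_\cle^2(\D)$. Inner-ness of $\Theta$ makes $c \mapsto \Theta c$ an isometry from $\clf$ onto $\clm_0$. Using $\Phi\Theta = \Theta U$ a.e.\ on $\T$, $T_\Phi(\Theta c) = P_{H_\cle^2(\D)}(\Phi\Theta c) = P_{H_\cle^2(\D)}(\Theta Uc) = \Theta Uc$ (since $\Theta Uc \in H_\cle^2(\D)$), and similarly $T_{\Phi^*}(\Theta c) = \Theta U^* c$. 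Hence $\clm_0$ is nonzero, $T_\Phi$-reducing, and $T_\Phi|_{\clm_0} \cong U$; in particular $T_\Phi$ has a unitary part.

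For (i)$\Rightarrow$(ii), replace $\clm_{T_\Phi}$ by the maximal unitary part $\clm := \{f \in H_\cle^2(\D) : \|T_\Phi^n f\| = \|T_{\Phi^*}^n f\| = \|f\|, \; \forall\, n \geq 0\}$ from Nagy--Foias, which is nonzero by hypothesis. The first step is to show the Toeplitz and Laurent operators coincide on $\clm$: for $f \in \clm$ the chain $\|f\| = \|T_\Phi f\| = \|P_{H_\cle^2(\D)} L_\Phi f\| \leq \|L_\Phi f\|_{L^2} \leq \|\Phi\|_\infty \|f\| \leq \|f\|$ forces $L_\Phi f = T_\Phi f \in H_\cle^2(\D)$; iterating and running the analogous argument for $\Phi^*$ gives $\Phi^n f, \Phi^{*n} f \in H_\cle^2(\D)$ and $T_\Phi^n f = L_\Phi^n f$ on $\clm$ for every $n \geq 0$.

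The main obstacle is to show $\clm$ is $M_z$-invariant. The key point is that for $f \in \clm$, since $\Phi f \in H_\cle^2(\D)$ has no negative Fourier modes, $T_\Phi(zf) = P_{H_\cle^2(\D)}(z\Phi f) = z\Phi f = z T_\Phi f \in H_\cle^2(\D)$, so $\|T_\Phi(zf)\| = \|\Phi f\|_{L^2} = \|f\| = \|zf\|$; iterating this computation and doing the same for $T_{\Phi^*}$ shows $zf \in \clm$. Beurling--Lax--Halmos then supplies a Hilbert space $\clf$ and an inner function $\Theta \in H_{\clb(\clf,\cle)}^\infty(\D)$ with $\clm = \Theta H_\clf^2(\D)$.

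To conclude, conjugate by the isometric isomorphism $W: H_\clf^2(\D) \to \clm$, $Wg := \Theta g$, transferring $V := T_\Phi|_\clm$ to the unitary $\tilde V := W^* V W$ on $H_\clf^2(\D)$. Since $L_\Phi$ and $L_z$ commute on $L^2_\cle(\T)$ and $WM_z = M_z W$, we obtain $\tilde V M_z = M_z \tilde V$; the classical identification of the commutant of $M_z$ with the multiplier algebra then forces $\tilde V = M_U$ for some $U \in H_{\clb(\clf)}^\infty(\D)$, and a standard further argument (isometry of $M_U$ gives $U^*U = I$ a.e.; $M_U^{-1} = M_\Lambda$ from the commutant theorem, combined with $M_U^* c = U(0)^* c$ at constants $c$, forces $\Lambda$ to be the constant $U(0)^*$, hence $U \equiv U(0)$ is a constant unitary) shows $U$ is a constant unitary on $\clf$. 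Finally $V(\Theta c) = \Theta(Uc)$ combined with $V(\Theta c) = T_\Phi(\Theta c) = \Phi\Theta c$ (by coincidence of $T_\Phi$ with $L_\Phi$ on $\clm$) yields $\Phi\Theta = \Theta U$ a.e.\ on $\T$, and the same argument applied to $V^*$ gives $\Phi^*\Theta = \Theta U^*$. The ``moreover'' then follows: $T_\Phi|_\clm \cong M_U$ on $H_\clf^2(\D)$ via $W$, i.e.\ multiplication by the constant unitary $U$.
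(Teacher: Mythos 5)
Your proof is correct, and the overall skeleton coincides with the paper's up through the Beurling--Lax--Halmos step: both arguments show that on the maximal unitary part $\clm$ the Toeplitz and Laurent operators agree ($T_{\Phi}^n f = L_{\Phi}^n f$, $T_{\Phi}^{*n} f = L_{\Phi}^{*n} f$), deduce $M_z$-invariance of $\clm$, and write $\clm = M_{\Theta} H_{\clf}^2(\D)$. Where you diverge is in extracting the constant unitary $U$. The paper stays in $H_{\cle}^2(\D)$: it shows $L_{\Phi} M_{\Theta} = M_{\Psi_1}$ and $L_{\Phi}^* M_{\Theta} = M_{\Psi_2}$ with $\Psi_1, \Psi_2$ inner, notes $M_{\Psi_2} H_{\clf}^2(\D) = L_{\Phi}^* \clm = \clm = M_{\Theta} H_{\clf}^2(\D)$, and invokes the \emph{uniqueness} clause of Beurling--Lax--Halmos to get $\Theta = \Psi_2 U$ for a constant unitary $U$. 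You instead conjugate $T_{\Phi}|_{\clm}$ by the unitary $g \mapsto \Theta g$ down to $H_{\clf}^2(\D)$, observe that the resulting unitary commutes with $M_z$, identify it via the commutant theorem as an analytic multiplier $M_U$, and then show a unitary analytic Toeplitz operator has constant unitary symbol (which is exactly the paper's Corollary \ref{unitoep}, so you could have cited it rather than re-deriving it via $M_U^{-1} = M_{\Lambda}$). The two mechanisms are logically interchangeable here --- BLH uniqueness and the commutant theorem are close cousins --- but your route has the mild advantage of isolating the statement ``a unitary in the commutant of the unilateral shift is a constant unitary,'' while the paper's route produces the intermediate inner functions $\Psi_1, \Psi_2$ explicitly, which it reuses in later sections. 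Your converse direction also uses a smaller witness, the subspace $\Theta\clf$ of $\Theta$ applied to constants rather than all of $M_{\Theta} H_{\clf}^2(\D)$; both are reducing, so this is immaterial.
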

As expected, this result enables us to identify sufficient conditions for Toeplitz operators to become completely non-unitary (see Theorem \ref{cnu-toep}).

We point out that an extension of Goor's result to the case of vectorial Toeplitz operators was claimed by Butz \cite{Butz}. Unfortunately, there is a gap in the proof of the main theorem in \cite{Butz} which we point out in Section \ref{Butz}.
The author's result when applied to $T_{\Phi}$ on $H_{\cle}^2(\D)$ implies that the unitary part of $T_{\Phi}$ must be of the form $H_{\cle_0}^2(\D)$ for some closed subspace $\cle_0 \subseteq \cle$. We show in Theorem \ref{Butzcorrect} that this happens only when $T_{\Phi}$ restricted to its unitary part is equal to a constant unitary.  More precisely, we establish the following result.
\begin{thm}
Let $T_{\Phi}$ be a contractive Toeplitz operator on $H_{\cle}^2(\D)$ with a unitary part $\clm_{T_{\Phi}}$. Then the following are equivalent:
\begin{enumerate}
\item[(i)] $\clm_{T_{\Phi}} = H_{\cle_0}^2(\D)$ for some closed subspace $\cle_0 \subseteq \cle$,
\item[(ii)] there exists a closed subspace $\cle_0 \subseteq \cle$ such that $\clm_{T_{\Phi}} \subseteq H_{\cle_0}^2(\D)$, and $T_{\Phi}|_{\clm_{T_{\Phi}}} = T_W|_{H_{\cle_0}^2(\D)}$, for some unitary $W: \cle_0 \raro \cle_0$,
\item[(iii)] there exists a closed subspace $\cle_0 \subseteq \cle$ such that $\clm_{T_{\Phi}} \subseteq H_{\cle_0}^2(\D)$, and $\cle_0$ is $\Phi(e^{it})$-reducing and $\Phi(e^{it})|_{\cle_0}$ is a constant unitary a.e. on $\T$.
\end{enumerate}
\end{thm}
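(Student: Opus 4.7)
The plan is to establish the equivalence via the cycle (iii) $\Rightarrow$ (ii) $\Rightarrow$ (i) $\Rightarrow$ (iii). Both (iii) $\Rightarrow$ (ii) and (ii) $\Rightarrow$ (i) reduce to straightforward bookkeeping about multiplication by a constant unitary, while (i) $\Rightarrow$ (iii) is where the substantive Fourier-analytic content lies.

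For (iii) $\Rightarrow$ (ii), set $W := \Phi(e^{it})|_{\cle_0}$, which is well-defined and unitary on $\cle_0$ by hypothesis. The $\Phi(e^{it})$-reducingness of $\cle_0$ ensures that multiplication by $\Phi$ sends $H_{\cle_0}^2(\D)$ into itself and coincides there with multiplication by the constant $W$; passing to Toeplitz operators gives $T_\Phi|_{H_{\cle_0}^2(\D)} = T_W|_{H_{\cle_0}^2(\D)}$. Restricting further to $\clm_{T_\Phi} \subseteq H_{\cle_0}^2(\D)$ yields (ii). For (ii) $\Rightarrow$ (i), the equation of operators $T_\Phi|_{\clm_{T_\Phi}} = T_W|_{H_{\cle_0}^2(\D)}$ demands that its two sides have matching domains, which combined with the inclusion $\clm_{T_\Phi} \subseteq H_{\cle_0}^2(\D)$ forces $\clm_{T_\Phi} = H_{\cle_0}^2(\D)$.

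The heart of the proof is (i) $\Rightarrow$ (iii). Assume $\clm_{T_\Phi} = H_{\cle_0}^2(\D)$. To see that $\cle_0$ is $\Phi(e^{it})$-reducing, pick $e \in \cle_0$ and $k \in \Z$; then $z^k e \in H_{\cle_0}^2(\D)$, so both $T_\Phi(z^k e) = \sum_{n \geq 0} \hat{\Phi}(n-k)\, e\, z^n$ and $T_\Phi^*(z^k e) = \sum_{n \geq 0} \hat{\Phi}(k-n)^*\, e\, z^n$ must remain in $H_{\cle_0}^2(\D)$ by reducingness. Letting $k$ range over $\Z$ sweeps out every Fourier index, yielding $\hat{\Phi}(m)\cle_0 \subseteq \cle_0$ and $\hat{\Phi}(m)^*\cle_0 \subseteq \cle_0$ for every $m \in \Int$, and hence $\Phi(e^{it})\cle_0 \subseteq \cle_0$ and $\Phi(e^{it})^*\cle_0 \subseteq \cle_0$ a.e.\ on $\T$.

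Setting $\Phi_0 := \Phi(e^{it})|_{\cle_0}$, the restriction $T_\Phi|_{H_{\cle_0}^2(\D)}$ becomes the Toeplitz operator $T_{\Phi_0}$ on $H_{\cle_0}^2(\D)$, which is unitary. For a constant $e \in \cle_0 \subseteq H_{\cle_0}^2(\D)$, the identities $\|T_{\Phi_0} e\|^2 = \|e\|^2$ and $\|T_{\Phi_0}^* e\|^2 = \|e\|^2$ give
\[
\sum_{n \geq 0} \|\hat{\Phi}_0(n) e\|^2 \;=\; \|e\|^2 \;=\; \sum_{n \leq 0} \|\hat{\Phi}_0(n)^* e\|^2,
\]
while Plancherel combined with the contractivity $\|\Phi_0\|_\infty \leq 1$ delivers the two-sided bounds $\sum_{n \in \Int} \|\hat{\Phi}_0(n) e\|^2 \leq \|e\|^2$ and $\sum_{n \in \Int} \|\hat{\Phi}_0(n)^* e\|^2 \leq \|e\|^2$. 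Since the one-sided partial sums already saturate the totals, $\hat{\Phi}_0(n) = 0$ for every $n \neq 0$, so $\Phi_0 \equiv \hat{\Phi}_0(0)$ is constant; the unitarity of $T_{\Phi_0}$ then makes this constant a unitary on $\cle_0$, completing (iii). The main obstacle I expect is precisely this Fourier-vanishing step: marrying the one-sided square-sum identities coming from unitarity on constants with the two-sided bound coming from $L^\infty$-contractivity in order to kill every off-zero coefficient from both sides simultaneously.
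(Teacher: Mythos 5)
Your argument is correct, and for the substantive direction it takes a genuinely different route from the paper. The paper's cycle is (i) $\Rightarrow$ (ii) $\Rightarrow$ (iii) $\Rightarrow$ (i), and its hard step (i) $\Rightarrow$ (ii) goes through the Beurling--Lax--Halmos representation $\clm_{T_{\Phi}} = M_{\Theta}H_{\clf}^2(\D)$: the uniqueness of that representation identifies $\Theta$ with $i_{\cle_0}A$ for a unitary $A:\clf\to\cle_0$, and the intertwining identities $\Phi\Theta=\Theta U$, $\Phi^{*}\Theta=\Theta U^{*}$ inherited from Theorem \ref{mainthm} then exhibit the constant unitary as $W=i_{\cle_0}AUA^{*}i_{\cle_0}^{*}$. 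You instead prove (i) $\Rightarrow$ (iii) by bare Fourier analysis: reducingness of $H_{\cle_0}^2(\D)$ applied to the monomials $z^{k}e$ shows that every coefficient $\hat{\Phi}(m)$ and $\hat{\Phi}(m)^{*}$ preserves $\cle_0$, and the saturation of the one-sided square sums against the two-sided Plancherel bound (using $\|\Phi_0\|_{\infty}\le\|T_{\Phi}\|\le 1$) kills all off-zero coefficients of $\Phi_0$. This buys you independence from both Beurling--Lax--Halmos and Theorem \ref{mainthm}, at the cost of re-deriving the paper's Corollary \ref{unitoep} by hand: once you know $T_{\Phi_0}$ is a unitary Toeplitz operator on $H_{\cle_0}^2(\D)$, that corollary already gives that $\Phi_0$ is a constant unitary. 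The remaining implications agree in substance with the paper's; in particular, both you and the authors read the identity in (ii) as an equality of operators together with their domains, which is what makes (ii) force $\clm_{T_{\Phi}}=H_{\cle_0}^2(\D)$ --- under the weaker reading ``agreement on $\clm_{T_{\Phi}}$'' the implication (ii) $\Rightarrow$ (i) would fail (consider $\Phi=\mathrm{diag}(1,\phi)$ with $\phi$ nonconstant unimodular, $\cle_0=\mathbb{C}^2$, $W=I$).
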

Note that $T_W$ is the constant Toeplitz operator  $W$ acting on the full Hardy space. Section \ref{Butz} is dedicated to proving the above result and also highlights the gap in the proof of the main theorem in \cite{Butz}.

Analytic Toeplitz operators, that is, Toeplitz operators with operator-valued bounded analytic symbols play a central role in connecting several topics with Operator theory.  There are two main reasons for this. The operator-theoretic advantage is that in this case, the Toeplitz operator becomes simply the multiplication operator.  For this reason, we choose to use the following notational convention throughout this article: 
\[
\text{If } \Phi(z) \in H_{\clb(\cle)}^{\infty}(\D) \text{ then the corrsponding Toeplitz operator is denoted by } M_{\Phi}.
\]
The other advantage lies in the fact that bounded analytic functions admit a realization formula. More precisely, any $\Phi(z) \in H_{\clb(\cle)}^{\infty}(\D)$ admits a transfer-function realization corresponding to a unitary
\[
W:= \begin{bmatrix}
A &B\\
C &D
\end{bmatrix}:\cle \oplus \clk \raro \cle \oplus \clk,
\]
that is, $\Phi(z) := \tau_W(z) = A +zB(I_{\cle} - zD)^{-1}C$. This way of representing bounded analytic functions has a profound impact on several topics in operator theory and function theory.  In particular, analytic Toeplitz operators with no unitary part play a vital role in the characterization of distinguished varieties in the bidisc by Agler and McCarthy in \cite{AM}. Their result shows that a distinguished variety in the bidisc always admits the following description 
\[
\mathcal{V} = \{(z,w) \in \D^2: \det (\Phi(z) - wI)=0\},
\]
where $\Phi(z)$ is a matrix-valued rational inner function in $H_{\clb(\mathbb{C}^n)}^{\infty}(\D)$.  The boundary behaviour of the distinguished variety is closely related to the fact that $M_{\Phi}$ on $H_{\mathbb{C}^n}^2(\D)$ does not admit a unitary part. This connection has been further explored in some recent works \cite{DS, DSau}.

Thus, it is evident that characterizing Toeplitz operators on $H_{\cle}^2(\D)$ with unitary parts will have far-reaching consequences.  Through the realization formula of $\Phi(z)$ it is easy to observe that the existence of the unitary part of $\Phi(0)$ will always imply that $M_{\Phi}$ will have a unitary part. Several examples in the literature show that the converse does not hold. For instance, see \cite[Remark 3.2]{DSau}. So, the main challenge lies in identifying the conditions which together with the existence of the unitary part of $M_{\Phi}$ will imply that $\Phi(0)$ will have a unitary part. This we have achieved in this article by showing the following result.

\begin{thm*}
Let $\Phi(z) \in H_{\clb(\cle)}^{\infty}(\D)$ with $\|\Phi\|_{\infty} \leq 1$.  If $M_{\Phi}$ has an unitary part $\clm_{M_{\Phi}}$, then the condition $\|T_{\Phi(0)} f \| = \|f\|$ for all $f \in \clm_{M_{\Phi}}$ implies that $\Phi(0)$ has a  unitary part.
\end{thm*}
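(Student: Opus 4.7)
Set $A := \Phi(0)$ and $\cle_1 := \ker(I_{\cle} - A^{*}A)$, so that $\cle_1$ is the closed subspace of vectors $v \in \cle$ with $\|Av\| = \|v\|$. The plan is first to read the hypothesis as the geometric statement that every $f \in \clm_{M_\Phi}$ takes values in $\cle_1$; second to invoke the main characterization theorem above to write $\clm_{M_\Phi} = \Theta H_\clf^{2}(\D)$ for an inner function $\Theta \in H_{\clb(\clf,\cle)}^{\infty}(\D)$ intertwined with a unitary $U$ on $\clf$ via $\Phi\Theta = \Theta U$ and $\Phi^{*}\Theta = \Theta U^{*}$; and third to extract from the first non-vanishing Taylor coefficient of $\Theta$ a non-zero $A$-reducing subspace of $\cle_1$ on which $A$ is unitary.

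For any $f \in H_\cle^{2}(\D)$ one has
\[
\|T_A f\|^{2} = \int_0^{2\pi}\|Af(e^{it})\|^{2}\,\frac{dt}{2\pi} \leq \|f\|^{2},
\]
with equality forcing $f(e^{it}) \in \cle_1$ a.e.\ on $\T$. Applied to $f \in \clm_{M_\Phi}$, the hypothesis therefore gives $f(e^{it}) \in \cle_1$ a.e.; projecting such an $f$ onto $\cle_{1}^{\perp}$ yields an $H^{2}$-function with vanishing boundary values and hence zero, so $f(z) \in \cle_1$ for every $z \in \D$. In particular, for each $w \in \clf$ the function $z \mapsto \Theta(z)w$ belongs to $\clm_{M_\Phi}$ and takes values in $\cle_1$. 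Expanding $\Theta(z) = \sum_{n \geq 0} c_n z^n$ with $c_n \in \clb(\clf,\cle)$, Cauchy's formula $c_n w = \frac{1}{2\pi}\int_0^{2\pi}\Theta(e^{it})w\,e^{-int}\,dt$ together with the closedness of $\cle_1$ gives $c_n\clf \subseteq \cle_1$ for every $n \geq 0$.

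Since $\clm_{M_\Phi} \neq \{0\}$, $\Theta \not\equiv 0$, so let $k_0$ be the smallest integer with $c_{k_0} \neq 0$. Extending $\Phi\Theta = \Theta U$ analytically to $\D$ and comparing the coefficients of $z^{k_0}$, one obtains $\sum_{n+m=k_0} a_n c_m = c_{k_0}U$, where $\Phi(z) = \sum_{n\geq 0} a_n z^n$ with $a_0 = A$. By minimality of $k_0$, only the $n = 0$ term survives, so
\[
A\, c_{k_0} = c_{k_0}\, U.
\]
Put $\cle_0 := \overline{c_{k_0}\clf} \subseteq \cle_1$, which is non-zero as $c_{k_0} \neq 0$. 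Surjectivity of $U$ yields $A(c_{k_0}\clf) = c_{k_0}U\clf = c_{k_0}\clf$, so by continuity $A\cle_0 = \cle_0$. Since $\cle_0 \subseteq \cle_1$, $A$ is also isometric on $\cle_0$, so $A|_{\cle_0}$ is a surjective isometry on the closed subspace $\cle_0$, hence unitary; in particular $A^{*}|_{\cle_0} = (A|_{\cle_0})^{-1}$ maps $\cle_0$ into itself. Thus $\cle_0$ is a non-zero $A$-reducing subspace on which $\Phi(0)$ is unitary, as desired.

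The principal difficulty is that the naive candidate $\overline{\Theta(0)\clf}$ can vanish (for instance when $\clm_{M_\Phi} \subseteq z H_\cle^{2}(\D)$); replacing $\Theta(0)$ by the first non-vanishing Taylor coefficient $c_{k_0}$ is precisely what preserves the intertwining $A c_{k_0} = c_{k_0}U$ needed to produce a non-trivial reducing subspace for $\Phi(0)$.
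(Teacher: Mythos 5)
Your proof is correct, and it takes a genuinely different route from the paper's. The paper first reduces the statement to Proposition \ref{analyticmainprop}: it extends $\Phi(e^{it})\Theta(e^{it})=\Theta(e^{it})U$ analytically to $\Phi(\lambda)\Theta(\lambda)=\Theta(\lambda)U$ on $\D$, and then must manufacture the second relation $\Phi(\lambda)^*\Theta(\lambda)=\Theta(\lambda)U^*$. For that it invokes the transfer-function realization $\Phi(z)=A+zB(I_{\cle}-zD)^{-1}C$ and the defect identity $I_{\cle}-\Phi(\lambda)^*\Phi(\lambda)=(1-|\lambda|^2)C^*(I-\bar{\lambda}D^*)^{-1}(I-\lambda D)^{-1}C$, which shows that the missing relation holds exactly when $C\Theta(\lambda)\equiv 0$, i.e.\ $A^*A\Theta(\lambda)=\Theta(\lambda)$, i.e.\ $\|T_{\Phi(0)}f\|=\|f\|$ on $\clm_{M_{\Phi}}$. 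You bypass the realization formula entirely: you read the norm hypothesis as saying that every $f\in\clm_{M_{\Phi}}$ takes values in $\cle_1=\ker(I_{\cle}-\Phi(0)^*\Phi(0))$, use only the single intertwining $\Phi\Theta=\Theta U$ at the level of the first non-vanishing Taylor coefficient $c_{k_0}$ of $\Theta$ to obtain $\Phi(0)c_{k_0}=c_{k_0}U$, and verify directly that $\cle_0=\overline{c_{k_0}\clf}$ is a non-zero reducing subspace on which $\Phi(0)$ is unitary (isometric because $\cle_0\subseteq\cle_1$, onto because $U$ is onto and an isometry has closed range, and $\Phi(0)^*$-invariant because $\Phi(0)^*\Phi(0)=I$ on $\cle_1$). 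What each approach buys: yours is more elementary, exhibits the unitary part of $\Phi(0)$ explicitly, and needs only one of the two intertwining relations; the paper's argument is less self-contained but slots into the chain of equivalences of Proposition \ref{analyticmainprop} and makes transparent that the norm condition is precisely equivalent to $C\Theta\equiv 0$, hence is in a sense the optimal hypothesis. Your closing remark that $\Theta(0)$ can vanish and must be replaced by $c_{k_0}$ addresses the same difficulty the paper handles by evaluating at some $\lambda$ with $\Theta(\lambda)\neq 0$; both fixes are valid.
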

This result shows that for investigating the unitary parts of $M_{\Phi}$, we need to consider $\Phi(0)$ as an operator on the full Hardy space. In particular,  for the existence of the unitary part of $\Phi(0)$ in $\cle$, we need to check whether $\Phi(0)$ is an isometry on the unitary part of $M_{\Phi}$.

Let us now discuss the plan of this paper. In section \ref{preliminaries}, we briefly give some background details and establish a few results that will be used in later sections. Following this, in section \ref{unitary}, we establish our main results involving the unitary parts of Toeplitz operators. In this section, we also discuss Butz's characterization and related results. In section \ref{analytic-toeplitz}, we deduce refined versions for analytic Toeplitz operators.  In the last section \ref{conclusion}, we discuss the conditions that are sufficient for Toeplitz operators to become completely non-unitary. We end with some relevant questions which are interesting in their own right.


\section{Preliminaries}\label{preliminaries}

This section will establish the background details that will be used throughout this article. Let us begin by looking at vector-valued Hilbert spaces. More precisely, let $\cle$ be a Hilbert space, then the space of square-integrable $\cle$-valued functions is denoted by $L_{\cle}^2(\mathbb{T})$, that is, 
\[
L_{\cle}^2(\mathbb{T}):= \{f \text{ is a } \cle-\text{valued measurable function on } \T \text{ and } \|f\|_2:= \big( \int_{\mathbb{T}} {\|f(z)\|_{\cle}^2 } \hspace{2mm} \mathrm{d} \mu \big)^{\frac{1}{2}} < \infty \},
\]
where $\mu$ is the normalized Lebesgue measure on $\mathbb{T}$. The space of all essentially bounded $\clb(\cle)$-valued functions on $\T$ is denoted by $L_{\clb(\cle)}^{\infty}(\T)$. Given $\Phi \in L_{\clb(\cle)}^{\infty}(\T)$ the Laurent operator $L_{\Phi}$ on $L_{\cle}^2(\T)$ is defined by 
\[
L_{\Phi} f(z) = \Phi(z) f(z) \quad (z \in \T).
\]
Brown and Halmos observed in \cite{BH} that an operator $T$ on $H^2(\D)$ is a Toeplitz operator if and only if $M_z^* T M_z = T$, where $M_z$ is the shift operator on $H^2(\D)$. The same characterization holds for Toeplitz operators on $H_{\cle}^2(\D)$, see for instance \cite{Page}. This algebraic characterization immediately gives us the following (probably well-known) results.
\begin{thm}\label{isometry}
Let $T_{\Phi}$ be a Toeplitz operator on $H_{\cle}^2(\D)$. Then $T_{\Phi}$ is an isometry if and only if $\Phi(z)$ is an inner function in $H_{\clb(\cle)}^{\infty}(\D)$. 
\end{thm}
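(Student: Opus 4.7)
The plan is to prove both directions directly, using the Fourier/Taylor expansion of $\Phi$ and testing the isometric identity $T_\Phi^*T_\Phi = I$ against the monomials $z^n e$ for $n \geq 0$ and $e \in \cle$.

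\textbf{Easy direction} ($\Phi$ inner $\Rightarrow T_\Phi$ isometric). If $\Phi \in H_{\clb(\cle)}^\infty(\D)$, then $L_\Phi$ preserves $H^2_\cle(\D)$, so $T_\Phi = L_\Phi|_{H^2_\cle(\D)}$ is simply pointwise multiplication. Since $\Phi(e^{it})$ is an isometry a.e., $\|T_\Phi f\|^2 = \int_\T \|\Phi(e^{it}) f(e^{it})\|_\cle^2 \, d\mu = \int_\T \|f(e^{it})\|_\cle^2 \, d\mu = \|f\|^2$ for every $f \in H^2_\cle(\D)$.

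\textbf{Hard direction, Step 1: $\Phi$ must be analytic.} Assume $T_\Phi^*T_\Phi = I$. Write the Fourier expansion $\Phi(e^{it}) \sim \sum_{k \in \Int} \widehat{\Phi}(k) e^{ikt}$ with $\widehat{\Phi}(k) \in \clb(\cle)$. For $e \in \cle$ and $n \geq 0$, compute
\[
T_\Phi(z^n e) = P_{H^2_\cle(\D)}\bigl(z^n \Phi(\cd) e\bigr) = \sum_{k \geq -n} \widehat{\Phi}(k) e \cdot z^{k+n}.
\]
Since $\{z^{k+n}\}_{k \geq -n}$ is orthonormal, $\|T_\Phi(z^n e)\|^2 = \sum_{k \geq -n} \|\widehat{\Phi}(k) e\|_\cle^2$. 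The isometry condition forces this to equal $\|z^n e\|^2 = \|e\|^2$ for every $n \geq 0$. Subtracting the cases $n$ and $n=0$ yields $\sum_{k=-n}^{-1} \|\widehat{\Phi}(k) e\|^2 = 0$ for all $n \geq 1$ and all $e \in \cle$, so $\widehat{\Phi}(k) = 0$ for $k < 0$. Hence $\Phi \in H_{\clb(\cle)}^\infty(\D)$.

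\textbf{Hard direction, Step 2: $\Phi(e^{it})$ is an isometry a.e.} Now $T_\Phi = M_\Phi$ is a genuine multiplication operator, so the isometric identity becomes
\[
\int_\T \|\Phi(e^{it}) f(e^{it})\|_\cle^2 \, d\mu = \int_\T \|f(e^{it})\|_\cle^2 \, d\mu \qquad (f \in H^2_\cle(\D)).
\]
Polarising and choosing $f = z^m e$, $g = z^n e'$ with $m, n \geq 0$ and $e, e' \in \cle$ gives
\[
\int_\T e^{i(m-n)t} \bigl\langle (\Phi(e^{it})^*\Phi(e^{it}) - I) e,\, e' \bigr\rangle \, d\mu = 0.
\]
As $m - n$ ranges over all of $\Int$, every Fourier coefficient of the bounded scalar function $t \mapsto \langle (\Phi^*\Phi - I) e,\, e'\rangle$ vanishes, so this function is zero a.e. Choosing a countable dense subset of the (assumed separable) $\cle$ and intersecting the corresponding null sets gives $\Phi(e^{it})^*\Phi(e^{it}) = I_\cle$ a.e., i.e.\ $\Phi$ is inner.

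\textbf{Anticipated obstacle.} The only subtlety is the a.e.\ conclusion in Step 2, which requires moving from the pointwise statement for a fixed pair $(e,e')$ to a single statement about the operator $\Phi(e^{it})^*\Phi(e^{it})$; the standard device is separability of $\cle$. The remaining computations are routine expansions in Fourier/Taylor coefficients.
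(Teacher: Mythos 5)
Your proof is correct, but it takes a genuinely different route from the paper's. The paper proves the hard direction by pure operator algebra: starting from the Brown--Halmos identity $M_z^* T_\Phi M_z = T_\Phi$ and the decomposition $I = M_zM_z^* + P_{\cle}$, it deduces $M_z^* T_\Phi^* P_{\cle} T_\Phi M_z = 0$, hence $P_{\cle}T_\Phi M_z = 0$, hence $T_\Phi M_z = M_z T_\Phi$, which is the module-map characterization of an analytic symbol; it then simply asserts that an isometric analytic Toeplitz operator has inner symbol. You instead work at the level of Fourier coefficients, testing $\|T_\Phi(z^n e)\| = \|z^n e\|$ against monomials to kill the negative coefficients $\widehat{\Phi}(k)$, $k<0$, and then polarizing against pairs of monomials to show $\Phi(e^{it})^*\Phi(e^{it}) = I_{\cle}$ a.e. Your approach is more elementary and self-contained (it does not invoke the vector-valued Brown--Halmos theorem, which the paper cites to Page), and it actually supplies the detail of the final step -- that isometry of $M_\Phi$ forces the boundary values to be isometries a.e. -- which the paper glosses over; the price is a (standard, and correctly flagged) separability hypothesis on $\cle$ needed to pass from the scalar identities $\langle(\Phi^*\Phi - I)e,e'\rangle = 0$ a.e.\ to a single operator identity a.e. The paper's argument, by contrast, is shorter, stays at the operator level, and exposes the intertwining $T_\Phi M_z = M_z T_\Phi$ that is reused elsewhere in the article. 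Both are valid proofs of the stated equivalence.
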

\begin{proof}
Since $T_{\Phi}$ is an isometry, we get
\begin{align*}
I_{H_{\cle}^2(\D)}  = M_{z}^*T_{\Phi}^* T_{\Phi}M_z = M_{z}^*T_{\Phi}^* M_z M_z^* T_{\Phi}M_z + M_{z}^*T_{\Phi}^* P_{\cle} T_{\Phi}M_z &= T_{\Phi}^* T_{\Phi} + M_{z}^*T_{\Phi}^* P_{\cle} T_{\Phi}M_z \\
&= I_{H_{\cle}^2(\D)} + M_{z}^*T_{\Phi}^* P_{\cle} T_{\Phi}M_z,
\end{align*}
where $P_{\cle}$ is the projection to $\cle$ viewed as constant functions in $H_{\cle}^2(\D)$.
Therefore,we get $M_{z}^*T_{\Phi}^* P_{\cle} T_{\Phi}M_z=0$, which is equivalent to $P_{\cle} T_{\Phi}M_z=0$. In other words,
\[
0 = P_{\cle} T_{\Phi}M_z = \big(I_{H_{\cle}^2(\D)} -M_z M_z^* \big)T_{\Phi}M_z = T_{\Phi}M_z - M_z T_{\Phi}.
\]
This implies that $\Phi(z)$ must be in $H_{\clb(\cle)}^{\infty}(\D)$ and furthermore, $T_{\Phi}$ is an isometry forces $\Phi(z)$ to be an inner function.
\end{proof}
Recall that $\Phi \in H_{\clb(\cle)}^{\infty}(\D)$ is defined to be an inner function if $\Phi(e^{it})$ is an isometry a.e. on $\T$. Applying the above result for unitary Toeplitz operators gives the following.
\begin{cor}\label{unitoep}
Let $T_{\Phi}$ be a Toeplitz operator on $H_{\cle}^2(\D)$. Then $T_{\Phi}$ is a unitary if and only if $\Phi(z)$ is a constant unitary on $\cle$.
\end{cor}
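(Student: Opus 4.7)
The plan is to deduce Corollary \ref{unitoep} directly from Theorem \ref{isometry} by exploiting the fact that a unitary is precisely an operator which is both an isometry and whose adjoint is an isometry, and then transferring this to the symbol via the identity $T_{\Phi}^{*} = T_{\Phi^{*}}$, where $\Phi^{*}(e^{it}) := \Phi(e^{it})^{*}$.

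First I would establish the backward direction: if $\Phi \equiv U$ for a constant unitary $U \in \clb(\cle)$, then $L_{\Phi}$ leaves $H_{\cle}^{2}(\D)$ invariant and acts coefficient-wise as $U$, so $T_{\Phi}$ is clearly unitary on $H_{\cle}^{2}(\D)$. This is immediate.

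For the forward (nontrivial) direction, assume $T_{\Phi}$ is unitary. Then both $T_{\Phi}$ and $T_{\Phi}^{*} = T_{\Phi^{*}}$ are isometries on $H_{\cle}^{2}(\D)$. Applying Theorem \ref{isometry} twice, I conclude that $\Phi \in H_{\clb(\cle)}^{\infty}(\D)$ is inner and $\Phi^{*} \in H_{\clb(\cle)}^{\infty}(\D)$ is inner. The key step is to observe that $\Phi$ being analytic while its pointwise adjoint is also analytic forces the Fourier expansion of $\Phi$ on $\T$ to collapse to its zeroth coefficient: indeed, if $\Phi(e^{it}) = \sum_{n \geq 0} A_{n} e^{int}$ in $L_{\clb(\cle)}^{2}(\T)$, then $\Phi(e^{it})^{*} = \sum_{n \geq 0} A_{n}^{*} e^{-int}$, and this being analytic in $e^{it}$ forces $A_{n}^{*} = 0$ for all $n \geq 1$. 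Hence $\Phi(e^{it}) \equiv A_{0}$ is a constant operator-valued function.

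Finally, since $\Phi \equiv A_{0}$ is an inner function, $A_{0}$ is an isometry on $\cle$, and since $\Phi^{*} \equiv A_{0}^{*}$ is also inner, $A_{0}^{*}$ is an isometry as well. Therefore $A_{0}$ is a constant unitary on $\cle$, completing the proof. I do not anticipate a major obstacle here; the only point that requires care is justifying $T_{\Phi}^{*} = T_{\Phi^{*}}$, which is standard and follows directly from the definition $T_{\Phi} = P_{H_{\cle}^{2}(\D)} L_{\Phi}|_{H_{\cle}^{2}(\D)}$ together with $L_{\Phi}^{*} = L_{\Phi^{*}}$ on $L_{\cle}^{2}(\T)$.
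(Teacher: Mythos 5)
Your proof is correct and follows exactly the route the paper intends: the paper gives no separate argument for Corollary \ref{unitoep}, stating only that it follows by ``applying'' Theorem \ref{isometry}, and your write-up supplies precisely the missing details --- applying the theorem to both $T_{\Phi}$ and $T_{\Phi}^{*}=T_{\Phi^{*}}$, using the simultaneous analyticity of $\Phi$ and $\Phi^{*}$ to kill all nonzero Fourier coefficients, and concluding that the constant $A_{0}$ is both an isometry and a co-isometry, hence unitary.
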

%
Before ending this section, let us note a result that will be used in the later sections.
\begin{lemma}\label{isom}
Let $A$ be a bounded operator on $\clh$ and $V \in \clb(\clh)$ be an isometry. Then $V \clh$ is $A$-reducing if and only if 
\[
A VV^* = VV^* A.
\]
\end{lemma}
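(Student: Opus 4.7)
The plan is to recognize that the stated commutation condition is simply the standard reducing-subspace criterion in disguise, so the proof amounts to identifying $VV^*$ as the orthogonal projection onto $V\clh$ and then invoking the well-known characterization of reducing subspaces via commuting projections.

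First I would verify that $VV^*$ is the orthogonal projection onto $V\clh$. Self-adjointness is immediate. Idempotence follows from $(VV^*)^2 = V(V^*V)V^* = VV^*$, using that $V$ is an isometry so $V^*V = I_{\clh}$. For the range identification, any element of $V\clh$ has the form $Vx$, and $(VV^*)(Vx) = V(V^*V)x = Vx$, showing $V\clh \subseteq \ran(VV^*)$; the reverse inclusion is obvious. As a byproduct, $V\clh$ is closed (being the range of a projection), so it is a bona fide closed subspace to which the reducing-subspace notion applies.

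Next I would invoke the standard fact that a closed subspace $\clm \subseteq \clh$ is $A$-reducing if and only if the orthogonal projection $P_{\clm}$ satisfies $AP_{\clm} = P_{\clm}A$. (This is because $\clm$ is $A$-invariant iff $AP_{\clm} = P_{\clm}AP_{\clm}$, and similarly $\clm^{\perp}$ is $A$-invariant iff $A^*P_{\clm} = P_{\clm}A^*P_{\clm}$; together these two conditions are equivalent to $AP_{\clm} = P_{\clm}A$ after taking adjoints.) Applying this with $\clm = V\clh$ and $P_{\clm} = VV^*$ yields precisely the equivalence $V\clh$ is $A$-reducing iff $AVV^* = VV^*A$.

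There is no real obstacle here; the only subtlety is to remember that the closedness of $V\clh$ needs to be noted (so that ``$A$-reducing'' makes sense), and this is handled for free once $VV^*$ is recognized as a projection. The whole proof is essentially a two-line argument.
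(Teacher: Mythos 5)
Your proof is correct and rests on the same underlying fact as the paper's: that $VV^*$ is the orthogonal projection onto $V\clh$ (the paper uses this in the equivalent form $VV^* = I - P_{\ker V^*}$), combined with the standard characterization of reducing subspaces via commuting projections, which the paper simply verifies by direct manipulation rather than citing. No essential difference in approach.
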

\begin{proof}
First, let us note that $A VV^* = VV^* A$ implies that $V\clh$ is $A$-reducing closed subspace of $\clh$. Now suppose that $V\clh$ is $A$-reducing then it immediately implies that 
\[
VV^* A V = AV,
\]
which further implies that $VV^* A VV^* = AVV^*$ and therefore,
\[
VV^* A (I - P_{\ker V^*}) = AVV^*.
\]
Now, $V\clh$ is $A$-reducing implies that $\ker V^*$ is $A$-reducing and therefore, $VV^* A P_{\ker V^*} = 0$. Hence,
\[
VV^* A (I - P_{\ker V^*}) = VV^* A.
\]
This completes the proof.
\end{proof}

\section{Unitary parts of Toeplitz operators}\label{unitary}
In this section, we aim to characterize the unitary parts of Toeplitz operators on vector-valued Hardy spaces.  Let us begin by first analyzing the unitary parts of the Laurent operators $L_{\Phi}$ on $L_{\cle}^2(\T)$. 

\begin{thm}
A contractive Laurent operator $L_{\Phi}$ on $L_{\cle}^2(\T)$ has a unitary part if and only if there exists a measurable subset $N \subseteq \mathbb{T}$ of positive measure such that $\Phi(z)$ is a unitary a.e. on $N$.
\end{thm}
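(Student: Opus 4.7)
The plan is to prove the equivalence by exploiting the multiplication-operator structure of $L_\Phi$: because $L_\Phi$ commutes with every scalar multiplication projection $L_{\chi_E I_\cle}$ for measurable $E \subseteq \T$, operator-theoretic statements about $L_\Phi$ translate into pointwise statements about $\Phi(z)$.

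For the $(\Leftarrow)$ direction, I would take $\clm := \{f \in L_\cle^2(\T) : f \equiv 0 \text{ a.e.\ on } \T \setminus N\}$, which is the range of the projection $L_{\chi_N I_\cle}$. Since this projection commutes with $L_\Phi$, the subspace $\clm$ is a nonzero closed reducing subspace of $L_\Phi$. On $\clm$, the operator $L_\Phi$ acts as pointwise multiplication by $\Phi|_N$, and because $\Phi(z)$ is unitary almost everywhere on $N$, this restriction is both isometric and coisometric, hence unitary.

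For the $(\Rightarrow)$ direction, suppose $\clm \neq 0$ reduces $L_\Phi$ and $L_\Phi|_\clm$ is unitary, and pick any nonzero $f \in \clm$. Writing the isometry condition $\|L_\Phi f\|^2 = \|f\|^2$ as
\[
\int_\T \big(\|f(z)\|_\cle^2 - \|\Phi(z) f(z)\|_\cle^2\big)\, d\mu = 0
\]
and using the pointwise contractivity $\|\Phi(z) f(z)\| \le \|f(z)\|$, one concludes $\|\Phi(z) f(z)\| = \|f(z)\|$ a.e., and similarly the coisometry condition yields $\|\Phi(z)^* f(z)\| = \|f(z)\|$ a.e. Hence on $N_f := \{z \in \T : f(z) \neq 0\}$, which has positive measure, one has the pointwise identities $\Phi(z)^* \Phi(z) f(z) = f(z)$ and $\Phi(z) \Phi(z)^* f(z) = f(z)$.

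The main obstacle, where I expect most of the effort to lie, is upgrading these fiberwise identities on the single vector $f(z)$ to full unitarity of $\Phi(z)$ as an operator on $\cle$ over a positive-measure subset. The strategy would be to exhaust $\clm$ by a countable dense sequence $\{f_n\}$, intersect the conull sets on which the isometric and coisometric identities hold for each $f_n$, and appeal to the measurable-field-of-subspaces description of reducing subspaces of the multiplication operator $L_\Phi$: the fibers $V(z) := \overline{\mathrm{span}}\{g(z) : g \in \clm\}$ should form a measurable field on which $\Phi(z)$ restricts to a unitary. The set $N$ would then be extracted from the locus where this fiberwise unitarity coincides with genuine unitarity of $\Phi(z)$ on all of $\cle$ — a step I expect to require a careful measurable-selection argument rather than being immediate from the pointwise identities alone.
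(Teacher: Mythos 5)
Your converse direction is correct and is exactly what the paper means by ``the converse direction is straightforward.'' The forward direction, however, is not a proof: it stops at precisely the step you yourself flag as ``the main obstacle.'' From $\|L_{\Phi}f\|=\|f\|=\|L_{\Phi}^{*}f\|$ you correctly extract the pointwise identities $\Phi(z)^{*}\Phi(z)f(z)=f(z)=\Phi(z)\Phi(z)^{*}f(z)$ a.e.\ on $\{f\neq 0\}$, but these only say that $\Phi(z)$ acts unitarily on the fibers $V(z)=\overline{\Span}\{g(z):g\in\clm\}$ of the reducing subspace; no argument is supplied for why $\Phi(z)$ should be unitary on all of $\cle$ over a set of positive measure, only the expectation that ``a careful measurable-selection argument'' will do it. The paper closes this gap by a different device: the canonical unitary part $\clm_{L_{\Phi}}$ is reducing for the bilateral shift $B$ (since $L_{\Phi}$ and $L_{\Phi}^{*}$ commute with $B$), and the Radjavi--Rosenthal description of $B$-reducing subspaces is invoked to conclude that $\clm_{L_{\Phi}}=\ran L_{\chi_{N}I_{\cle}}$ for a measurable set $N$ of positive measure; this forces every fiber to be $\{0\}$ or all of $\cle$, after which your pointwise identities, applied to the constant sections $\chi_{N}\eta$, $\eta\in\cle$, give unitarity of $\Phi$ a.e.\ on $N$.

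You should be aware, though, that the fiberwise picture you describe is the accurate one, and the step you defer cannot be carried out in general: when $\dim\cle\geq 2$ the $B$-reducing subspaces of $L_{\cle}^{2}(\T)$ are ranges of multiplication by measurable \emph{projection-valued} functions, not only by scalar characteristic functions, so the fibers $V(z)$ may be proper nonzero subspaces of $\cle$. Concretely, take $\cle=\C^{2}$ and $\Phi(e^{it})=\begin{pmatrix}1&0\\0&e^{it}/2\end{pmatrix}$; then $L^{2}(\T)\oplus\{0\}$ is a nonzero reducing subspace on which $L_{\Phi}$ acts as the identity, so $L_{\Phi}$ has a unitary part, yet $\Phi(e^{it})$ is not unitary at any point of $\T$. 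What your argument actually establishes is the weaker (and, for $\dim\cle\geq2$, correct) statement that on a set of positive measure $\Phi(z)$ admits a nonzero reducing subspace on which it restricts to a unitary; this coincides with the theorem as stated only in the scalar case. The same example shows that the identification $\clm_{L_{\Phi}}=\ran L_{\chi_{N}I_{\cle}}$ in the paper's proof needs the multiplicity-one hypothesis, so your instinct that a genuine obstruction remains at this point is well founded --- but a proof proposal must either overcome the obstruction or recognize that it refutes the claim, and yours does neither.
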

\begin{proof}
Note that the unitary subspace of the contractive Laurent operator $L_{\Phi}$ is 
\[
\clm_{L_{\Phi}} = \{h \in L_{\cle}^2(\T): \|L_{\Phi}^{n}h\| = \|h\| = \|L_{\Phi}^{*n}h\|, \forall n \in \Nat \},
\]
equivalently, 
\[
\clm_{L_{\Phi}} = \{h \in L_{\cle}^2(\T): L_{\Phi}^{n} L_{\Phi}^{*n}h = h = L_{\Phi}^{*n} L_{\Phi}^{n} h, \forall n \in \Nat \},
\]
Since $L_{\Phi}$ commutes with the bilateral shift $B$ on $L_{\cle}^2(\T)$ we get that $\clm_{L_{\Phi}}$ is a $B$-reducing closed subspace of $L_{\cle}^2(\mathbb{T})$. From \cite[Theorem 3.6, Page 39]{RR} it follows that there exists a measurable set $N \subseteq \T$ such that
\[
\clm_{L_{\Phi}} = \{f \in L_{\cle}^2(\T): f = 0 \text{ a.e. on N} \} = \ran L_{\chi_{N^{\complement}}},
\]
where $\chi_{N^{c}}$ is the characteristic function of $N^c:= \T \setminus N$. Thus, for any $f \in L_{\cle}^2(\T)$ we have
\[
L_{\Phi}^{*n} L_{\Phi}^{n}  L_{\chi_{N^c}} f= L_{\chi_{N^c}}f; \quad L_{\Phi}^{n}  L_{\Phi}^{*n} L_{\chi_{N^c}} f= L_{\chi_{N^c}}f \quad (\forall n \in \Nat).
\]
In other words,  for all $n \in \Nat$,
\[
\Phi(e^{it})^{n} \Phi(e^{it})^{*n} \chi_{N^c}(e^{it}) = \chi_{N^c}(e^{it}) ; \quad \Phi(e^{it})^{*n} \Phi(e^{it})^n \chi_{N^c}(e^{it}) = \chi_{N^c}(e^{it}) \quad (\text{ a.e. on } \mathbb{T}).
\]
The converse direction is straightforward. This completes the proof.
\end{proof}
We will now prove the main result of this article.
\begin{thm}\label{mainthm}
Let $T_{\Phi}$ be a contractive Toeplitz operator on $H_{\cle}^2(\D)$.  Then the following are equivalent
\begin{enumerate}
\item[(i)] $T_{\Phi}$ has a unitary part say $\clm_{T_{\Phi}}$,
\item[(ii)] there exists a Hilbert space $\clf$ and an inner function $\Theta(z) \in H_{\clb(\clf,\cle)}^{\infty}(\D)$ and a unitary $U: \clf \raro \clf$ such that
\begin{equation}\label{maincondn}
\Phi(e^{it}) \Theta(e^{it}) = \Theta(e^{it}) U \quad \text{and} \quad  \Phi(e^{it})^* \Theta(e^{it}) = \Theta(e^{it}) U^* \quad (\text{ a.e. on }\T).
\end{equation}
In this case $T_{\Phi}|_{\clm_{T_{\Phi}}}$ is unitarily equivalent to a constant unitary.
\end{enumerate} 
\end{thm}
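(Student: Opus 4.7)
The plan is to treat the two implications separately.

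For (ii)$\Raro$(i), the candidate subspace is $\clm := M_{\Theta} H_{\clf}^2(\D)$, which is closed because the inner function $\Theta$ forces $M_{\Theta}$ to be an isometry by Theorem \ref{isometry}. Reading the identities in \eqref{maincondn} as operator identities on $H_{\cle}^2(\D)$ gives $T_{\Phi} M_{\Theta} = M_{\Theta} M_U$ and $T_{\Phi}^* M_{\Theta} = M_{\Theta} M_U^*$. Together these show that $\clm$ is $T_{\Phi}$-reducing and that $T_{\Phi}|_{\clm}$ is unitarily equivalent via $M_{\Theta}$ to the constant Toeplitz operator $M_U$, which is unitary by Corollary \ref{unitoep}.

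For (i)$\Raro$(ii), write $\clm := \clm_{T_{\Phi}}$. The first observation is that on $\clm$ the projection in $T_{\Phi} = P_{H_{\cle}^2(\D)} L_{\Phi}|_{H_{\cle}^2(\D)}$ is inert: since $\|L_{\Phi}\| = \|\Phi\|_{\infty} \leq 1$, the chain
\[
\|f\| = \|T_{\Phi} f\| = \|P_{H_{\cle}^2(\D)} L_{\Phi} f\| \leq \|L_{\Phi} f\| \leq \|f\|
\]
forces $L_{\Phi} f \in H_{\cle}^2(\D)$ and $T_{\Phi} f = L_{\Phi} f$ for every $f \in \clm$. Iterating on $T_{\Phi} f \in \clm$ yields $L_{\Phi}^n f = T_{\Phi}^n f \in H_{\cle}^2(\D)$, and the parallel argument for the isometric $T_{\Phi}^*|_{\clm}$ gives $L_{\Phi^*}^n f = T_{\Phi}^{*n} f \in H_{\cle}^2(\D)$, for all $n \geq 0$. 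This lets me establish the key step, the $M_z$-invariance of $\clm$: for $f \in \clm$, since $L_{\Phi}$ commutes with the bilateral shift, $L_{\Phi}^n(zf) = z L_{\Phi}^n f \in z H_{\cle}^2(\D) \subseteq H_{\cle}^2(\D)$, so projecting gives $T_{\Phi}^n(zf) = z T_{\Phi}^n f$ with $\|T_{\Phi}^n(zf)\| = \|zf\|$; the mirror computation with $L_{\Phi^*}$ yields $\|T_{\Phi}^{*n}(zf)\| = \|zf\|$, placing $zf$ in the unitary subspace $\{h : \|T_{\Phi}^n h\| = \|T_{\Phi}^{*n} h\| = \|h\|,\ \forall n \geq 0\} = \clm$.

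The Beurling--Lax theorem now writes $\clm = M_{\Theta} H_{\clf}^2(\D)$ for a Hilbert space $\clf$ and an inner $\Theta \in H^{\infty}_{\clb(\clf, \cle)}(\D)$. Transporting $T_{\Phi}|_{\clm}$ across the isometry $M_{\Theta}$ yields a unitary $V$ on $H_{\clf}^2(\D)$ with $T_{\Phi} M_{\Theta} = M_{\Theta} V$. The commutation $T_{\Phi} M_z = M_z T_{\Phi}$ on $\clm$, visible because $T_{\Phi}$ agrees with $L_{\Phi}$ there and $L_{\Phi}$ commutes with $M_z$, together with $M_{\Theta} M_z = M_z M_{\Theta}$, forces $V M_z = M_z V$ on $H_{\clf}^2(\D)$. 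Hence $V = M_U$ for some $U \in H^{\infty}_{\clb(\clf)}(\D)$ in the commutant of the shift, and since $V$ is unitary, Corollary \ref{unitoep} forces $U$ to be a constant unitary. The identity $T_{\Phi} M_{\Theta} = M_{\Theta} M_U$ and its companion $T_{\Phi}^* M_{\Theta} = M_{\Theta} M_U^*$ (obtained by inverting on $\clm$), evaluated on constants $\clf \subseteq H_{\clf}^2(\D)$, deliver \eqref{maincondn} a.e.\ on $\T$, and exhibit $T_{\Phi}|_{\clm}$ as unitarily equivalent to the constant unitary $U$. I expect the main obstacle to be the $M_z$-invariance of $\clm$: it is not formally visible from the definition of the unitary part, and depends on the removal of the $H^2$-projection when restricted to $\clm$.
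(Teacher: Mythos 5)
Your proposal is correct, and for the substantive part of (i)$\Rightarrow$(ii) it follows the paper's strategy exactly: show that $T_{\Phi}$ agrees with $L_{\Phi}$ (and $T_{\Phi}^{*}$ with $L_{\Phi}^{*}$) on the unitary part, deduce $M_z$-invariance of $\clm_{T_{\Phi}}$ from the commutation of $L_{\Phi}$ with the bilateral shift, and invoke Beurling--Lax--Halmos to write $\clm_{T_{\Phi}} = M_{\Theta}H_{\clf}^2(\D)$; the converse direction is likewise the same computation. You diverge only in how the constant unitary $U$ is extracted. The paper introduces the two inner functions $\Psi_1$ and $\Psi_2$ with $L_{\Phi}M_{\Theta}=M_{\Psi_1}$ and $L_{\Phi}^{*}M_{\Theta}=M_{\Psi_2}$, uses the surjectivity $L_{\Phi}^{*}\clm_{T_{\Phi}}=\clm_{T_{\Phi}}$ from \eqref{cnu1} to see that $M_{\Psi_2}H_{\clf}^2(\D)=M_{\Theta}H_{\clf}^2(\D)$, and appeals to the uniqueness part of Beurling--Lax--Halmos to obtain $\Theta=\Psi_2 U$. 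You instead transport $T_{\Phi}|_{\clm_{T_{\Phi}}}$ across the isometry $M_{\Theta}$ to a unitary $V$ on $H_{\clf}^2(\D)$, verify that $V$ commutes with the shift, and conclude from Corollary \ref{unitoep} that $V$ is a constant unitary; this is a legitimate alternative endgame of comparable weight, slightly more economical in that it produces $U$ and both intertwining identities in one stroke. One point you should make fully explicit: the Toeplitz identity $T_{\Phi}M_{\Theta}=M_{\Theta}U$ by itself only controls the analytic projection of $\Phi\Theta$, so passing to the a.e.\ boundary identities \eqref{maincondn} requires the fact, which you did establish at the outset, that $L_{\Phi}$ and $L_{\Phi}^{*}$ coincide with $T_{\Phi}$ and $T_{\Phi}^{*}$ on $\clm_{T_{\Phi}}$.
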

\begin{proof}
Since $T_{\Phi} = P_{H_{\cle}^2(\D)} L_{\Phi}|_{H_{\cle}^2(\D)}$, the assumption $\|T_{\Phi}\| \leq 1$, implies that $\|L_{\Phi}\| = \|T_{\Phi}\| \leq 1$.  Let us now prove $(i) \implies (ii)$.  From the assumption in $(i)$, the following unitary part of $T_{\Phi}$ is non-zero
\[
\clm_{T_{\Phi}} = \{h \in H_{\cle}^2(\D): \|T_{\Phi}^{n}h\| = \|h\| = \|T_{\Phi}^{*n}h\|, \forall n \in \Nat \}
\]
Now, $h \in \clm_{T_{\Phi}}$ implies that
\[
\|h\| = \|T_{\Phi}h\| =\|P_{H_{\cle}^2(\D)} L_{\Phi} h\| \leq \| L_{\Phi} h\| \leq \|h\|,
\]
and therefore, $ L_{\Phi} h = T_{\Phi}h$. This implies that $L_{\Phi}h \in H_{\cle}^2(\D)$ and $\|L_{\Phi}h\| = \|h\|$. In a similar manner, we can show that $L_{\Phi}^* h \in H_{\cle}^2(\D)$ and $\|L_{\Phi}^*h \| = \|h\|$.  Now, we will prove by induction that
\[
\clm_{T_{\Phi}} = \{h \in H_{\cle}^2(\D): L_{\Phi}^{n}h, L_{\Phi}^{*n}h \in H_{\cle}^2(\D) \text{ and } \|L_{\Phi}^{n}h\| = \|h\| = \|L_{\Phi}^{*n}h\|, \forall n \in \Nat \}
\]
Suppose $k \in \Nat$ such that for all $n \in \Nat$ and $1 \leq n \leq k$, the following,
\[
L_{\Phi}^{n}h, L_{\Phi}^{*n}h \in H_{\cle}^2(\D) \text{ and } \|L_{\Phi}^{n}h\| = \|h\| = \|L_{\Phi}^{*n}h\|,
\]
holds for all $h \in \clm_{T_{\Phi}}$. Then 
\[
\|h\| = \|T_{\Phi}^{k+1}h\| =\|(P_{H_{\cle}^2(\D)} L_{\Phi} |_{H_{\cle}^2(\D)})^{k+1} h\| \leq \| L_{\Phi}^{k+1} h\| \leq \|h\|,
\]
shows that $\| L_{\Phi}^{k+1} h\| = \|T_{\Phi}^{k+1}h\| = \|h\|$ and therefore, $L_{\Phi}^{k+1} h \in H_{\cle}^2(\D)$. Similarly, we can show that $\| L_{\Phi}^{*k+1} h\| = \|T_{\Phi}^{*k+1}h\| = \|h\|$ and therefore, $L_{\Phi}^{*k+1} h \in H_{\cle}^2(\D)$. This proves our claim about $\clm_{T_{\Phi}}$. Furthermore, $\|L_{\Phi}^*\| = \|L_{\Phi}\| = \|T_{\Phi}\| \leq 1$, implies that for any $h \in \clm_{T_{\Phi}}$
\[
\|( I_{H_{\cle}^2(\D)} - L_{\Phi}^n L_{\Phi}^{*n})^{\frac{1}{2}}h\|^2 = \langle I_{H_{\cle}^2(\D)} - L_{\Phi}^n L_{\Phi}^{*n}h, h \rangle  = \|h\|^2 - \|L_{\Phi}^{*n} h\|^2 = 0,
\]
and similarly,
\[
\|( I_{H_{\cle}^2(\D)} - L_{\Phi}^{*n} L_{\Phi}^{n})^{\frac{1}{2}}h\|^2 = \langle I_{H_{\cle}^2(\D)} - L_{\Phi}^{*n} L_{\Phi}^{n}h, h \rangle  = \|h\|^2 - \|L_{\Phi}^n h\|^2 = 0.
\]
Thus, we get that 
\begin{equation}\label{structure}
\clm_{T_{\Phi}} = \{h \in H_{\cle}^2(\D): L_{\Phi}^{n}h, L_{\Phi}^{*n}h \in H_{\cle}^2(\D) \text{ and } L_{\Phi}^{n}L_{\Phi}^{*n}h= h = L_{\Phi}^{*n}L_{\Phi}^{n}h, \forall n \in \Nat \}
\end{equation}
From the above structure, we can immediately identify the following properties.
\begin{equation}\label{cnu1}
L_{\Phi} \clm_{T_{\Phi}} = \clm_{T_{\Phi}}; \quad L_{\Phi}^* \clm_{T_{\Phi}} = \clm_{T_{\Phi}}.
\end{equation}
Now, for any $h \in \clm_{T_{\Phi}}$ and $n \in \Nat$,
\[
L_{\Phi}^n M_z h = L_{\Phi}^n B h = B L_{\Phi}^n h = M_z L_{\Phi}^n h,
\]
holds, where $B$ is the bilateral shift on $L_{\cle}^2(\T)$, and thus $M_z = B|_{H_{\cle}^2(\D)}$. In a similar manner, $L_{\Phi}^{*n} M_z h = M_z L_{\Phi}^{*n} h$. Thus, for any $h \in \clm_{T_{\Phi}}$ we get,
\[
\| L_{\Phi}^n M_z h \| = \|L_{\Phi}^n h\| = \|h\| =  \|L_{\Phi}^{*n} h\| = \| L_{\Phi}^{*n} M_z h \|.
\]
The above observations prove that $\clm_{T_{\Phi}}$ is a $M_z$-invariant closed subspace of $H_{\cle}^2(\D)$. Thus, we can use the Beurling-Lax-Halmos theorem to show that there exists a Hilbert space $\clf$ and an inner function $\Theta(z) \in H_{\clb(\clf, \cle)}^{\infty}(\D)$ such that
\[
\clm_{T_{\Phi}} = \Theta(z) H_{\clf}^2(\D).
\]
From this description of $\clm_{T_{\Phi}}$ and using condition (\ref{structure}) we get 
\begin{equation}\label{prop1}
L_{\Phi}^* L_{\Phi} M_{\Theta} = M_{\Theta} = L_{\Phi} L_{\Phi}^* M_{\Theta},~\text{on}~H_{\clf}^2(\D).
\end{equation}
Now, 
\[
L_{\Phi} M_{\Theta} M_z = L_{\Phi} M_z M_{\Theta} = L_{\Phi} B M_{\Theta} = B L_{\Phi} M_{\Theta}  = M_z L_{\Phi} M_{\Theta},
\]
where the last equality follows from the fact $ L_{\Phi} M_{\Theta} H_{\clf}^2(\D) \subseteq H_{\cle}^2(\D)$. This implies that 
\begin{equation}\label{condn1}
L_{\Phi} M_{\Theta}  = M_{\Psi_1},
\end{equation}
for some $\Psi_1 \in H_{\clb(\clf,\cle)}^{\infty}(\D)$ and thus, from condition (\ref{prop1}) we also get that,
\begin{equation}\label{condn4}
 M_{\Theta}  = L_{\Phi}^* M_{\Psi_1},
\end{equation}
Moreover, for any $f \in H_{\clf}^2(\D)$,
\[
\|M_{\Psi_1} f\| = \|L_{\Phi} M_{\Theta} f\| = \|f\|,
\]
shows that $\Psi_1(z)$ must be an inner function. Similarly, we can show that there exists an inner function $\Psi_2(z) \in H_{\clb(\clf,\cle)}^{\infty}(\D)$ such that
\begin{equation}\label{condn2}
L_{\Phi}^* M_{\Theta} = M_{\Psi_2}.
\end{equation}
and thus, using condition (\ref{prop1}), we get
\begin{equation}\label{ancondn1}
 M_{\Theta} = L_{\Phi} M_{\Psi_2}.
\end{equation}
Now using conditions (\ref{cnu1}) and (\ref{condn2}), we get that
\[
M_{\Psi_2} H_{\clf}^2(\D) = L_{\Phi}^* M_{\Theta} H_{\clf}^2(\D) = L_{\Phi}^*\clm_{T_{\Phi}} = \clm_{T_{\Phi}} = M_{\Theta}H_{\clf}^2(\D).
\]
By the uniqueness of the Beurling-Lax-Halmos theorem \cite[Theorem 2.1, Page 239]{FoFr}, there must exist a unitary (say) $U:\clf \raro \clf$ such that
\begin{equation}\label{condn3}
\Theta(z) = \Psi_2(z) U \quad (z \in \D).
\end{equation}
Moreover, from conditions (\ref{condn1}), (\ref{ancondn1}), and (\ref{condn3}) we have
\[
M_{\Psi_1} = L_{\Phi} M_{\Theta} = L_{\Phi} M_{\Psi_2} U = M_{\Theta}U.
\]
In other words, 
\begin{equation}\label{condn5}
\Theta(z) = \Psi_1(z) U^*  \quad (z \in \D).
\end{equation}
Now, from conditions (\ref{ancondn1}) and (\ref{condn3}) we get,
\begin{equation}\label{imp1}
\Phi(e^{it}) \Theta(e^{it}) =  \Phi(e^{it}) \Psi_2(e^{it}) U  = \Theta(e^{it}) U \quad (\text{ a.e. on }\T).
\end{equation}
and furthermore, from conditions (\ref{condn4}) and (\ref{condn5}) we get
\begin{equation}\label{imp2}
\Phi(e^{it})^* \Theta(e^{it}) =\Phi(e^{it})^* \Psi_1(e^{it}) U^* = \Theta(e^{it}) U^* \quad (\text{ a.e. on }\T).
\end{equation}

From the above conditions, we can observe that 
\[
M_{\Theta}^* T_{\Phi} M_{\Theta} = U; \quad M_{\Theta}^* T_{\Phi}^* M_{\Theta} = U^*,
\]
which shows that $T_{\Phi}|_{\clm_{T_{\Phi}}}$ is unitarily equivalent to a constant unitary. 

Conversely, if there exists a Hilbert space $\clf$, for which there exists an inner function $\Theta(z) \in H_{\clb(\clf, \cle)}^{\infty}(\D)$ and an unitary $U: \clf \raro \clf$ such that
\[
\Phi(e^{it}) \Theta(e^{it}) = \Theta(e^{it}) U \quad \text{and} \quad  \Phi(e^{it})^* \Theta(e^{it}) = \Theta(e^{it}) U^* \quad (\text{ a.e. on }\T).
\]
then it further implies that for all $n \in \Nat$, we have
\begin{equation}
\Phi(e^{it})^n \Theta(e^{it}) =  \Theta(e^{it}) U^{n}; \quad \Phi(e^{it})^{*n} \Theta(e^{it}) =  \Theta(e^{it}) U^{*n} \quad (\text{ a.e. on }\T).
\end{equation}
and therefore,
\begin{equation}
\Phi(e^{it})^{*n} \Phi(e^{it})^n \Theta(e^{it}) =  \Theta(e^{it}); \quad \Phi(e^{it})^n \Phi(e^{it})^{*n} \Theta(e^{it}) =  \Theta(e^{it}) \quad (\text{ a.e. on }\T).
\end{equation}
This shows that for all $n \in \Nat$, $L_{\Phi}^{n} M_{\Theta}$ and $L_{\Phi}^{*n} M_{\Theta}$ both belongs to $H_{\cle}^2(\D)$ and moreover,
\[
L_{\Phi}^{*n} L_{\Phi}^{n} M_{\Theta} = M_{\Theta} \quad \text{and} \quad  L_{\Phi}^n  L_{\Phi}^{*n} M_{\Theta} = M_{\Theta} \quad (\forall n \in \Nat),
\]
Thus clearly, $M_{\Theta} H_{\clf}^2(\D)\subseteq \clm_{T_{\Phi}}$ and therefore, $\clm_{T_{\Phi}} \neq \{0\}$, which implies that $T_{\Phi}$ has a non-zero unitary part. This completes the proof.
\end{proof}

The above result can be used to prove the following equivalent statement of Goor's characterization of completely non-unitary Toeplitz contractions in \cite{Goor}.
\begin{cor}
If $\phi \in L^{\infty}(\T)$, $\|\phi\| \leq 1$ and $\phi$ is not constant a.e. on $\T$, then $T_{\phi}$ does not have a unitary part.
\end{cor}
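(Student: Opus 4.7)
The plan is to derive the corollary from Theorem \ref{mainthm} applied in the scalar setting $\cle = \C$, and to show that the data produced by condition (ii) forces $\phi$ to be constant a.e.

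First I would argue by contrapositive: assume $T_\phi$ has a unitary part. Then Theorem \ref{mainthm} yields a non-zero Hilbert space $\clf$, an inner function $\Theta \in H_{\clb(\clf, \C)}^{\infty}(\D)$, and a unitary $U : \clf \to \clf$ satisfying (\ref{maincondn}). The crucial reduction is dimensional: since $\cle = \C$ is one-dimensional, any isometric map $\Theta(e^{it}) : \clf \to \C$ must satisfy $\dim \clf \leq 1$, and since $\clm_{T_\phi} = \Theta(z) H_{\clf}^2(\D)$ is non-zero we must have $\dim \clf = 1$. Thus after identifying $\clf$ with $\C$, the function $\Theta$ becomes a scalar inner function and $U$ becomes multiplication by some unimodular constant $\lambda \in \T$.

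With this identification, the two identities in (\ref{maincondn}) degenerate to the scalar equations $\phi(e^{it}) \Theta(e^{it}) = \lambda \Theta(e^{it})$ and $\overline{\phi(e^{it})} \Theta(e^{it}) = \bar\lambda \Theta(e^{it})$ a.e.\ on $\T$. Because $\Theta$ is a scalar inner function, $|\Theta(e^{it})| = 1$ on a set of full measure, so cancellation is legitimate and yields $\phi(e^{it}) = \lambda$ a.e.\ on $\T$, which contradicts the hypothesis that $\phi$ is not constant a.e.

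I do not anticipate a genuine obstacle here: once Theorem \ref{mainthm} is available, the whole argument rests on the dimension bound $\dim \clf = 1$ and a single pointwise cancellation. The only step requiring explicit justification is the dimension count, which is immediate from the definition of an inner function (pointwise a.e.\ isometry into $\cle$) together with $\dim \cle = 1$.
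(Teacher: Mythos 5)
Your proposal is correct and follows essentially the same route as the paper: apply Theorem \ref{mainthm} in the contrapositive, reduce to a scalar inner function $\theta$ and a unimodular constant, and cancel $\theta(e^{it})$ using $|\theta(e^{it})|=1$ a.e.\ to conclude $\phi$ is constant. The only difference is that you make explicit the dimension count $\dim\clf=1$, which the paper leaves implicit.
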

\begin{proof}
From the assumption, $T_{\phi}$ is a contractive Toeplitz operator on $H^2(\D)$. Now, if there exists a unitary part of $T_{\phi}$, then by the above result there must exist an inner function $\theta(z) \in H^{\infty}(\D)$ such that 
\[
\phi(e^{it}) \theta(e^{it}) =   \mu  \theta(e^{it}); \quad \phi(e^{it})^* \theta(e^{it}) =\bar{\mu}  \theta(e^{it}), \quad (\text{a.e. on } \T),
\]
 for some uni-modular constant $\mu$. This immediately implies that $\phi(e^{it}) = \bar{\mu}$ a.e. on $\T$, which is a contradiction to our assumption that $\phi$ is non-constant a.e. on $\T$. This completes the proof. 
\end{proof} 

\begin{rem}
Note that from Theorem \ref{mainthm}, if $T_{\Phi}$ has a unitary part, then there exists a Hilbert space $\clf$ and an inner function say $\Theta(z) \in H_{\clb(\clf,\cle)}^{\infty}(\D)$, such that
\[
\Theta(e^{it}) \clf \subseteq \{\eta \in \cle: \Phi(e^{it})^n \Phi^{*n}(e^{it}) \eta = \eta = \Phi(e^{it})^{*n} \Phi^{n}(e^{it}) \eta, \forall n \in \Nat\} \quad (\text{ a.e. on }\T),
\]
However, it is unclear when the above two sets are equal a.e. on $\T$.  For this reason, we cannot deduce that $T_{\Phi}$ has a non-zero unitary part if and only if $\Phi(e^{it})$ has a non-zero unitary part almost everywhere on $\T$. 
\end{rem}

\begin{rem}
The unitary part of a contraction $T$ on $\clh$ is the intersection of the co-isometry and isometry parts, that is,
\[
\clh_U(T) = \clh_i(T) \cap \clh_{i}(T^*),
\] 
where
\[
\clh_i(T) = \{h \in \clh: \|T^n h\| = \|h\|, \text{ for all $n$ } \in \Nat\}
\]
The method of proving Theorem \ref{mainthm} shows that if $T_{\Phi}$ has a isometry part, then it is also $M_z$-invariant and there exists a Hilbert space $\clf$, an inner function $\Theta(z) \in H_{\clb(\clf,\cle)}^{\infty}(\D)$, and a unitary $U:\clf \raro \clf$ such that
\[
\Phi(e^{it}) \Theta(e^{it}) = \Theta(e^{it}) U \quad (\text{ a.e. on } \mathbb{T}).
\]
Similarly, if $T_{\Phi}$ has a co-isometry part, then there exists a Hilbert space $\tilde{\clf}$, an inner function $\Psi(z)  \in H_{\clb(\tilde{\clf},\cle)}^{\infty}(\D)$, and a unitary $\tilde{U}^*:\tilde{\clf} \raro \tilde{\clf}$ such that 
\[
\Phi(e^{it})^* \Psi(e^{it}) = \Psi(e^{it}) \tilde{U}^* \quad (\text{ a.e. on } \mathbb{T}).
\]
Theorem \ref{mainthm} shows that if $T_{\Phi}$ has a unitary part then we can choose the same inner function and unitary in the above identities.
\end{rem}

Let us now identify the restriction imposed on $T_{\Phi}$ by the conditions (\ref{maincondn}) on the symbol $\Phi$.
\begin{propn}\label{mainprop}
Let $\Phi \in L_{\clb(\cle)}^{\infty}(\T)$ then the following statements are equivalent:
\begin{enumerate}
\item[(i)] There exists a Hilbert space $\clf$, an inner function $\Theta(z) \in H_{\clb(\clf,\cle)}^{\infty}(\D)$, and a unitary $U: \clf \raro \clf$ such that
\[
\Phi(e^{it}) \Theta(e^{it}) = \Theta(e^{it}) U \quad \text{and} \quad  \Phi(e^{it})^* \Theta(e^{it}) = \Theta(e^{it}) U^* \quad (\text{ a.e. on } \T).
\]
\item[(ii)] There exists a $M_z$-invariant closed subspace $\cls \subseteq H_{\cle}^2(\D)$ which is $T_{\Phi}$-reducing and $T_{\Phi}|_{ \cls}$ is a unitary.  In this case, $T_{\Phi}|_{\cls}$ is always unitarily equivalent to a constant unitary.
\end{enumerate}
\end{propn}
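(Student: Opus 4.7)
The plan is to prove the two implications separately, with the main work concentrated in $(ii) \Rightarrow (i)$ where one must extract a constant symbol from a purely operator-theoretic hypothesis.

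For $(i) \Rightarrow (ii)$: Set $\cls := \Theta(z) H^2_\clf(\D)$. Since $\Theta$ is inner, $\cls$ is a closed $M_z$-invariant subspace and $M_\Theta: H^2_\clf(\D) \to \cls$ is an isometric surjection. The identity $\Phi(e^{it})\Theta(e^{it}) = \Theta(e^{it}) U$ gives $L_\Phi M_\Theta f = M_\Theta (Uf) \in \cls \subseteq H_{\cle}^2(\D)$ for every $f \in H^2_\clf(\D)$, so $T_\Phi M_\Theta = M_\Theta U$ and $\cls$ is $T_\Phi$-invariant; the companion identity $\Phi^*\Theta = \Theta U^*$ gives $T_\Phi^* M_\Theta = M_\Theta U^*$, so $\cls$ is $T_\Phi$-reducing. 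Because $U$ is unitary on $\clf$, the induced operator $T_U$ on $H^2_\clf(\D)$ is a (constant) unitary, and $M_\Theta$ implements $T_\Phi|_\cls \simeq T_U$, giving the last clause of (ii).

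For $(ii) \Rightarrow (i)$: By the Beurling-Lax-Halmos theorem applied to the $M_z$-invariant subspace $\cls$, there exist a Hilbert space $\clf$ and an inner function $\Theta(z) \in H^\infty_{\clb(\clf,\cle)}(\D)$ with $\cls = \Theta H^2_\clf(\D)$. Define $U := M_\Theta^* T_\Phi M_\Theta$. The reducing hypothesis gives $T_\Phi M_\Theta = M_\Theta U$ and $T_\Phi^* M_\Theta = M_\Theta U^*$, and because $M_\Theta$ is an isometry onto $\cls$ and $T_\Phi|_\cls$ is unitary, $U$ is unitary on $H^2_\clf(\D)$. The crucial step is now to show $U$ has \emph{constant} symbol: using $M_z M_\Theta = M_\Theta M_z$ and the Brown-Halmos relation $M_z^* T_\Phi M_z = T_\Phi$, one computes
\[
M_z^* U M_z \;=\; M_\Theta^* M_z^* T_\Phi M_z M_\Theta \;=\; M_\Theta^* T_\Phi M_\Theta \;=\; U,
\]
so $U$ is a Toeplitz operator on $H^2_\clf(\D)$, and by Corollary \ref{unitoep} its symbol is a constant unitary $U_0 \in \clb(\clf)$.

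The remaining obstacle is to upgrade the operator equation $T_\Phi M_\Theta = M_\Theta T_{U_0}$ to the pointwise $L^\infty$-equality $\Phi \Theta = \Theta U_0$, since a priori the operator identity tells us only that $\Phi\Theta f - \Theta U_0 f \in (H^2_\cle(\D))^\perp$ for each $f \in H^2_\clf(\D)$. The remedy is shift amplification: applying the same identity with $f$ replaced by $z^n f$ (which stays in $H^2_\clf(\D)$ and commutes past $\Theta$ and $U_0$) yields $z^n(\Phi \Theta f - \Theta U_0 f) \in (H^2_\cle(\D))^\perp$ for every $n \geq 0$, forcing every Fourier coefficient of $\Phi\Theta f - \Theta U_0 f$ to vanish. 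Specializing $f$ to range over constant functions in $\clf$ gives $\Phi(e^{it})\Theta(e^{it}) = \Theta(e^{it}) U_0$ a.e.\ on $\T$, and the identical argument applied to $T_\Phi^* M_\Theta = M_\Theta T_{U_0^*}$ yields the conjugate identity. This simultaneously furnishes (i) and confirms the ``constant unitary'' clause in (ii).
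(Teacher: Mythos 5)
Your proof is correct, and the implication $(i)\Rightarrow(ii)$ is essentially identical to the paper's. For $(ii)\Rightarrow(i)$ you follow the same skeleton (Beurling--Lax--Halmos, show the compression $M_{\Theta}^* T_{\Phi} M_{\Theta}$ is a constant unitary, then recover the pointwise identities), but the mechanism in the middle step is genuinely different: the paper identifies $M_{\Theta}^* T_{\Phi} M_{\Theta}$ with $T_{\Theta^*\Phi\Theta}$ and applies Theorem \ref{isometry} twice, to $T_{\Theta^*\Phi\Theta}$ and to $T_{\Theta^*\Phi^*\Theta}$, concluding that $\Theta^*\Phi\Theta$ is both inner and co-inner and hence a constant unitary; you instead verify the Brown--Halmos relation $M_z^* U M_z = U$ for the compression directly (using $M_z M_{\Theta} = M_{\Theta} M_z$) and invoke Corollary \ref{unitoep}. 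Your route is slightly more economical, since it sidesteps the need to justify the symbol identity $M_{\Theta}^* T_{\Phi} M_{\Theta} = T_{\Theta^*\Phi\Theta}$, which itself requires the observation $P_{H_{\clf}^2(\D)} L_{\Theta^*} P_{H_{\cle}^2(\D)} = P_{H_{\clf}^2(\D)} L_{\Theta^*}$ on $L_{\cle}^2(\T)$. More importantly, your shift-amplification argument upgrading the operator identity $T_{\Phi} M_{\Theta} = M_{\Theta} T_{U_0}$ --- which a priori only says $\Phi\Theta f - \Theta U_0 f \perp H_{\cle}^2(\D)$ --- to the a.e.\ identity $\Phi\Theta = \Theta U_0$ makes explicit a step that the paper's proof dispatches with an ``and therefore''; since $z^n(\Phi\Theta f - \Theta U_0 f)\perp H_{\cle}^2(\D)$ for all $n\geq 0$ forces all Fourier coefficients to vanish, the argument is sound, and writing it out is an improvement in rigor. (One cosmetic remark: to pass from the vector identities $\Phi(e^{it})\Theta(e^{it})\eta = \Theta(e^{it})U_0\eta$ to the a.e.\ operator identity one should run the argument over a countable dense subset of $\clf$ and take a common null set; this is standard and the paper is no more careful on this point.)
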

\begin{proof}
First, let us look at the proof of $(i) \implies (ii)$.  Note that, 
\begin{equation}
\Phi(e^{it}) \Theta(e^{it}) = \Theta(e^{it}) U \quad \text{and} \quad  \Phi(e^{it})^* \Theta(e^{it}) = \Theta(e^{it}) U^*,
\end{equation}
a.e. on $\T$ implies that 
\begin{equation}\label{toepcon1}
T_{\Phi} M_{\Theta} = M_{\Theta} U; \quad T_{\Phi}^* M_{\Theta} = M_{\Theta} U^*.
\end{equation}
This implies that $M_{\Theta} H_{\clf}^2(\D)$ is $T_{\Phi}$-reducing. Furthermore, 
\[
T_{\Phi}^n  T_{\Phi}^{*n} M_{\Theta} = M_{\Theta}; \quad T_{\Phi}^{*n}  T_{\Phi}^{n} M_{\Theta} = M_{\Theta},
\]
shows that $T_{\Phi}|_{M_{\Theta}H_{\clf}^2(\D)}$ is a unitary.  Now, from condition \ref{toepcon1}, we get
\[
M_{\Theta}^* T_{\Phi} M_{\Theta} = U; \quad M_{\Theta}^* T_{\Phi}^* M_{\Theta} = U^*.
\]
This implies that 
\[
T_{\Phi}|_{M_{\Theta} H_{\clf}^2(\D)} \cong U; \quad T_{\Phi^*}|_{M_{\Theta} H_{\clf}^2(\D)} \cong U^*.
\]
Thus, $T_{\Phi}|_{M_{\Theta} H_{\clf}^2(\D)}$ is unitarily equivalent to the constant unitary $U$.

For proving that $(ii) \implies (i)$, let us note that there exists a Hilbert space $\clf$ and an inner function $\Theta(z) \in H_{\clb(\clf,\cle)}^{\infty}(\D)$ such that $\cls = M_{\Theta} H_{\clf}^2(\D)$. Using Lemma \ref{isom} and the fact that $\cls$ is $T_{\Phi}$-reducing we get that 
\[
T_{\Phi} M_{\Theta}M_{\Theta}^* = M_{\Theta}M_{\Theta}^* T_{\Phi}.
\]
Now note that,
\[
T_{\Theta^* \Phi \Theta} = M_{\Theta}^* T_{\Phi} M_{\Theta},
\]
and therefore,
\begin{align*}
T_{\Theta^* \Phi \Theta}^* T_{\Theta^* \Phi \Theta} =  M_{\Theta}^* T_{\Phi}^* M_{\Theta}M_{\Theta}^* T_{\Phi} M_{\Theta} = M_{\Theta}^* T_{\Phi}^* T_{\Phi} M_{\Theta} = I_{H_{\clf}^2(\D)}.
\end{align*}
The last equality follows from the assumption that $T_{\Phi}|_{\cls}$ is unitary.  Thus, we get that $T_{\Theta^* \Phi \Theta}$ is an isometry, and hence, from Theorem \ref{isometry}, we get
\begin{equation}\label{inner1}
\Theta(z)^*  \Phi(z) \Theta(z) \quad \text{ is an inner function in } H_{\clb(\clf,\cle)}^{\infty}(\D).
\end{equation}
Similarly, using the assumption of reducibility, we get
\[
T_{\Phi}^* M_{\Theta}M_{\Theta}^* = M_{\Theta}M_{\Theta}^* T_{\Phi}^*,
\]
and therefore,
\begin{align*}
T_{\Theta^* \Phi^* \Theta}^* T_{\Theta^* \Phi^* \Theta} =  M_{\Theta}^* T_{\Phi} M_{\Theta} M_{\Theta}^* T_{\Phi}^* M_{\Theta}  = M_{\Theta}^* T_{\Phi} T_{\Phi}^* M_{\Theta} = I_{H_{\clf}^2(\D)}.
\end{align*}
This implies that $T_{\Theta^* \Phi^* \Theta}$ is an isometry, and by using Theorem \ref{isometry}, we get
\begin{equation}\label{inner2}
\Theta(z)^*  \Phi(z)^* \Theta(z) \quad \text{ is an inner function in } H_{\clb(\clf,\cle)}^{\infty}(\D).
\end{equation}
Thus, from conditions (\ref{inner1}) and (\ref{inner2}) we can deduce that 
\[
\Theta(z)^*  \Phi(z) \Theta(z) \quad \text{ is a constant unitary (say) } \quad U:\clf \raro \clf,
\]
which imply that
\[
M_{\Theta}^* T_{\Phi}^* M_{\Theta} = U^*; \quad M_{\Theta}^* T_{\Phi} M_{\Theta} = U.
\]
Applying by $M_{\Theta}$ on the left side of both identities we get
\[
T_{\Phi}^* M_{\Theta} = M_{\Theta} U^*; \quad  T_{\Phi} M_{\Theta} = M_{\Theta} U,
\]
and therefore, 
\[
\Phi(e^{it}) \Theta(e^{it}) = \Theta(e^{it}) U \quad \text{and} \quad  \Phi(e^{it})^* \Theta(e^{it}) = \Theta(e^{it}) U^*,
\]
almost everywhere on $\T$.  This completes the proof.
\end{proof}

\begin{rem}
In a sense, conditions (\ref{maincondn}) are associated with an operator equation related to Toeplitz operators. More precisely, if $T_{\Phi}$ is a contractive Toeplitz operator and $\clm_{\T_{\Phi}} = M_{\Theta}H_{\clf}^2(\D)$ is the unitary part, then the following equation for 
\[
\Phi \Theta = \Theta X; \quad \Phi^* \Theta = \Theta Y,
\]
always have a unique solution, i.e. $X= U$, and $Y = U^*$.
\end{rem}

\subsection{A note on  Butz's characterization}\label{Butz}
In this section, we show that Butz's result \cite{Butz}, for Toeplitz operators on vector-valued Hardy spaces holds only for some special cases. Note that, the author's result when applied to $T_{\Phi}$ on $H_{\cle}^2(\D)$ gives the following.
\begin{thm}[Butz] 
Let $T_{\Phi}$ be a Toeplitz operator on $H_{\cle}^2(\D)$. Then there exists an orthogonal  decomposition $\cle = \cle_0 \oplus \cle_1$ such that 
\[
\clm_{T_{\Phi}} = H_{\cle_0}^2(\D); \quad \clm_{T_{\Phi}}^{\perp} = H_{\cle_1}^2(\D), 
\]
and $x = \sum_{n=0}^{\infty} M_z^n f_n \in \clm_{T_{\Phi}}$ if and only if $f_n \in \cle_0$ for all $n \in \Nat \cup \{0\}$. Furthermore, if $\clm_{T_{\Phi}} \neq \{0\}$ (i.e. $ \cle_0 \neq \{0\}$), then there exists a unitary operator $R_0: \cle_0 \raro \cle_0$ such that for $x = \sum_{n=0}^{\infty} M_z^n f_n \in \clm_{T_{\Phi}}$, we have $T_{\Phi}x = \sum_{n=0}^{\infty} M_z^n R_0 f_n$.
\end{thm}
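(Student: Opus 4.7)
My starting point is Theorem \ref{mainthm}, which already does most of the structural work: the unitary part has the form $\clm_{T_\Phi}=M_\Theta H_\clf^2(\D)$ for some inner $\Theta\in H_{\clb(\clf,\cle)}^\infty(\D)$ and a constant unitary $U:\clf\to\clf$ with $\Phi\Theta=\Theta U$ and $\Phi^*\Theta=\Theta U^*$ almost everywhere on $\T$, and moreover $\clm_{T_\Phi}$ is shown to be $M_z$-invariant. Butz's conclusion is equivalent to the single extra assertion that $\clm_{T_\Phi}$ is \emph{also} $M_z^*$-invariant: granted this, a routine Fourier-coefficient argument — extracting the constant term of any $g\in\clm_{T_\Phi}$ via $P_\cle g=g-M_zM_z^*g$ and iterating on $M_z^{*n}g$ — identifies $\clm_{T_\Phi}$ with $H_{\cle_0}^2(\D)$ for $\cle_0:=\clm_{T_\Phi}\cap\cle$, and simultaneously gives $\clm_{T_\Phi}^\perp=H_{\cle_1}^2(\D)$ with $\cle_1:=\cle_0^\perp$. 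So the crux of the proof is the upgrade from $M_z$-invariance to $M_z^*$-invariance.

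To attack this I would exploit the Brown--Halmos identity $M_z^*T_\Phi M_z=T_\Phi$ (which holds for $T_\Phi^*$ as well), from which a short manipulation using $M_zM_z^*=I-P_\cle$ gives
\[
T_\Phi M_z^* \;=\; M_z^* T_\Phi - M_z^* T_\Phi P_\cle,
\]
and an analogous identity for $T_\Phi^*$. Iterating and applying to $h\in\clm_{T_\Phi}$, the plan is to bound $\|T_\Phi^k M_z^*h\|$ above by $\|M_z^*h\|$ by arguing that the error terms involving $P_\cle h$ are orthogonal to the principal piece $M_z^* T_\Phi^k h$, and likewise for $\|T_\Phi^{*k}M_z^*h\|$; equality of all these norms, forced by $\|T_\Phi^{(*)k}\|\leq 1$, would then place $M_z^*h$ in $\clm_{T_\Phi}$.

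Once $M_z^*$-invariance is available the last clause is essentially automatic: since $T_\Phi$ reduces $H_{\cle_0}^2(\D)$, its restriction is a Toeplitz operator on that smaller vector-valued Hardy space which is simultaneously a unitary, and Corollary \ref{unitoep} forces the restriction to be multiplication by a constant unitary $R_0:\cle_0\to\cle_0$; evaluating on $x=\sum_n M_z^n f_n$ with $f_n\in\cle_0$ yields $T_\Phi x=\sum_n M_z^n R_0 f_n$.

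The obvious obstacle is the $M_z^*$-invariance step. The difficulty is that $T_\Phi$ and $M_z$ commute only modulo a correction supported on $\cle$, and for a generic $h\in\clm_{T_\Phi}$ I see no a priori reason why the iterated correction $M_z^*T_\Phi^k P_\cle h$ should vanish or align orthogonally with the main term. In the scalar setting Goor's rigidity makes the question vacuous — the unitary part is trivial unless $\phi$ is constant — but in the operator-valued case there is genuine room for $\clm_{T_\Phi}$ to be $M_z$-invariant while failing to be $M_z^*$-invariant (equivalently, for the inner $\Theta$ furnished by Theorem \ref{mainthm} to be non-constant), and this is precisely where I would scrutinise any proposed argument most carefully.
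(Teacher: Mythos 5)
You cannot finish this proof, and the obstacle you flagged at the end of your proposal is not a technical inconvenience but the actual state of affairs: the statement is false. The paper records it only as Butz's claim from \cite{Butz} and does \emph{not} prove it; the sentence immediately following it reads ``The following example shows that the above result is not always true,'' and the remainder of Section \ref{Butz} is devoted to locating the error in Butz's argument and to proving the corrected statement, Theorem \ref{Butzcorrect}. So there is no proof in the paper for your attempt to be measured against — only a refutation.

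Your reduction is nonetheless exactly on target, and it pinpoints the same gap the authors identify. By Theorem \ref{mainthm}, $\clm_{T_\Phi}=M_\Theta H_\clf^2(\D)$ for an inner $\Theta$, and Butz's conclusion amounts to $\Theta$ being a constant isometry, i.e.\ to $\clm_{T_\Phi}$ being $M_z$-\emph{reducing} rather than merely $M_z$-invariant. This upgrade genuinely fails. A concrete witness: take $\cle=\C^2$, let $b$ be a non-constant scalar inner function, set $\Theta(z)=\tfrac{1}{\sqrt2}\begin{pmatrix}1\\ b(z)\end{pmatrix}$ (an inner function in $H^\infty_{\clb(\C,\C^2)}(\D)$), and let $\Phi(e^{it})=\Theta(e^{it})\Theta(e^{it})^*$ be the rank-one orthogonal projection onto $\ran\Theta(e^{it})$. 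Then $\|\Phi\|_\infty\le 1$, $\Phi\Theta=\Theta$ and $\Phi^*\Theta=\Theta$ a.e., and the norm argument from the proof of Theorem \ref{mainthm} shows $\clm_{T_\Phi}=\Theta H^2(\D)=\{(g,bg)^{T}:g\in H^2(\D)\}$, which is $M_z$-invariant but equals no $H^2_{\cle_0}(\D)$ (it contains no non-zero constants). So the Brown--Halmos manipulation you propose cannot be made to close: the correction terms supported on $\cle$ really do obstruct $M_z^*$-invariance. This is precisely the circularity the authors point out in Butz's own proof (the unproved claim $c_0\in C\cap\clm$ in the remark closing Section \ref{Butz}). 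The salvageable content is Theorem \ref{Butzcorrect}: the Butz decomposition holds if and only if $T_\Phi$ restricted to its unitary part is a constant unitary, equivalently iff some $\cle_0$ reduces $\Phi(e^{it})$ a.e.\ with constant unitary restriction — and under that hypothesis your final step (Corollary \ref{unitoep} applied to the reducing subspace $H^2_{\cle_0}(\D)$) is the right way to produce $R_0$.
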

The following example shows that the above result is not always true.

\begin{example}
Consider $\cle = \cle_0 \oplus \cle_1$ a non-trivial orthogonal decomposition. Let $P = P_{\cle_0} \in \clb(\cle)$ and thus, $P^{\perp} = P_{\cle_1}$.  Now let us consider the symbol $\Phi(z):= zP+P^{\perp}$. Note that 
\[
I_{\cle} - \Phi(z)^* \Phi(z) = (1 - |z|^2)P = I_{\cle} - \Phi(z) \Phi(z)^*,
\]
shows that $\Phi(z)$ is unitary-valued on $\T$ and therefore, the corresponding Toeplitz operator $M_{\Phi}$ is an isometry. Since, $\Phi(0) = P^{\perp}$ is not a pure contraction on $\cle$, we know from \cite[Theorem 3.4]{Sarkar} that $M_{\Phi}$ is not completely non-unitary. In other words, the subspace $\clm_{M_{\Phi}} \neq \{0\}$. Now suppose, by Butz's characterization we have $\clm_{T_{\Phi}} = H_{\clf}^2(\D)$ for some $\clf \subseteq \cle$. From the structure of $\clm_{T_{\Phi}}$, for any $h \in \clm_{T_{\Phi}}$, we have
\[
h \in \cap_{n=1}^{\infty} \ker (I_{H_{\cle}^2(\D)} - M_{\Phi}^n M_{\Phi}^{*n}).
\]
Now note that for any $n \in \Nat$,
\[
I  - M_{\Phi}^n M_{\Phi}^{*n} = I - (M_z^n P + P^{\perp})(M_z^{*n} P + P^{\perp}) =  I - M_z^n M_z^{*n} P - P^{\perp} = (I - M_z^n M_z^{*n}) P
\]
Therefore, 
\[
h \in \cap_{n=1}^{\infty} \ker (I_{H_{\cle}^2(\D)} - M_{\Phi}^n M_{\Phi}^{*n}) \Longleftrightarrow h \in \cap_{n=1}^{\infty} \ker (I_{H_{\cle}^2(\D)} - M_z^n M_z^{*n}) P \subseteq \cle.
\]
This immediately contradicts $\clm_{T_{\Phi}} = H_{\clf}^2(\D)$ for some closed subspace $\clf \subseteq \cle$.
\end{example}
Now let us identify the situation when the decomposition considered by Butz holds.
\begin{thm}\label{Butzcorrect}
Let $T_{\Phi}$ be a contractive Toeplitz operator on $H_{\cle}^2(\D)$ with a unitary part $\clm_{T_{\Phi}}$. Then the following are equivalent:
\begin{enumerate}
\item[(i)] $\clm_{T_{\Phi}} = H_{\cle_0}^2(\D)$ for some closed subspace $\cle_0 \subseteq \cle$,
\item[(ii)] there exists a closed subspace $\cle_0 \subseteq \cle$ such that $\clm_{T_{\Phi}} \subseteq H_{\cle_0}^2(\D)$, and $T_{\Phi}|_{\clm_{T_{\Phi}}} = T_W|_{H_{\cle_0}^2(\D)}$, for some unitary $W: \cle_0 \raro \cle_0$,
\item[(iii)] there exists a closed subspace $\cle_0 \subseteq \cle$ such that $\clm_{T_{\Phi}} \subseteq H_{\cle_0}^2(\D)$, and $\cle_0$ is $\Phi(e^{it})$-reducing and $\Phi(e^{it})|_{\cle_0}$ is a constant unitary a.e. on $\T$.
\end{enumerate}
\end{thm}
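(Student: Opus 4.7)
The plan is to establish the equivalence via the cycle (iii) $\Rightarrow$ (ii) $\Rightarrow$ (i) $\Rightarrow$ (iii), which keeps each step short and isolates a single idea per implication.

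For (iii) $\Rightarrow$ (ii), the reducibility of $\cle_0$ under $\Phi(e^{it})$ together with the fact that $\Phi(e^{it})|_{\cle_0} = W$ a.e.\ imply that for any $f \in H_{\cle_0}^2(\D)$ the product $L_\Phi f$ coincides with $Wf$ and already lies in $H_{\cle_0}^2(\D) \subseteq H_{\cle}^2(\D)$; thus $T_\Phi f = L_\Phi f = T_W f$, and $T_\Phi|_{H_{\cle_0}^2(\D)} = T_W$ is a unitary. This yields $H_{\cle_0}^2(\D) \subseteq \clm_{T_\Phi}$, and combined with the reverse containment assumed in (iii) I obtain $\clm_{T_\Phi} = H_{\cle_0}^2(\D)$, which together with the identity just derived is exactly (ii).

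For (ii) $\Rightarrow$ (i), the identity $T_\Phi|_{\clm_{T_\Phi}} = T_W|_{H_{\cle_0}^2(\D)}$ is an equality of bounded operators whose natural domains are $\clm_{T_\Phi}$ and $H_{\cle_0}^2(\D)$ respectively; the equation therefore forces those domains to coincide, giving (i).

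For (i) $\Rightarrow$ (iii), I would invoke Theorem \ref{mainthm} to produce an inner function $\Theta \in H_{\clb(\clf, \cle)}^{\infty}(\D)$ and a unitary $U$ on $\clf$ such that $\clm_{T_\Phi} = M_\Theta H_\clf^2(\D)$ together with the intertwining identities $\Phi \Theta = \Theta U$ and $\Phi^* \Theta = \Theta U^*$ a.e.\ on $\T$. Under the hypothesis $M_\Theta H_\clf^2(\D) = H_{\cle_0}^2(\D)$, the $M_z$-wandering subspaces of the two sides must agree: on the left it equals $M_\Theta \clf = \{\,z \mapsto \Theta(z)\xi : \xi \in \clf\,\}$, and on the right it is $\cle_0$ (the constant functions). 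Matching these forces $z \mapsto \Theta(z)\xi$ to be a constant element of $\cle_0$ for every $\xi \in \clf$, so $\Theta$ itself is constant and $\Theta(0) : \clf \to \cle_0$ is a unitary. Identifying $\clf$ with $\cle_0$ via $\Theta(0)$, the two intertwining identities reduce to $\Phi(e^{it}) \xi = U\xi$ and $\Phi(e^{it})^* \xi = U^* \xi$ for $\xi \in \cle_0$ a.e., which is precisely (iii) with $W = U$.

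The main obstacle is the collapse of $\Theta$ to a constant inner function in the step (i) $\Rightarrow$ (iii); this is where one must exploit the intrinsic (i.e.\ subspace-dependent, not factor-dependent) nature of the wandering subspace, rather than merely the existence part of Beurling--Lax--Halmos. Once that reduction is in hand, the remaining two implications are essentially bookkeeping.
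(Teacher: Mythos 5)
Your proof is correct and uses essentially the same ingredients as the paper: the paper runs the cycle (i)$\Rightarrow$(ii)$\Rightarrow$(iii)$\Rightarrow$(i), and your reversed cycle simply repackages the same three ideas — Theorem \ref{mainthm} plus the collapse of $\Theta$ to a constant unitary (your wandering-subspace computation is exactly the uniqueness part of Beurling--Lax--Halmos that the paper invokes), the direct verification that a constant unitary restriction of the symbol places $H_{\cle_0}^2(\D)$ inside the unitary part, and the domain-matching reading of the operator identity in (ii), which the paper also relies on. The only spot worth tightening is (iii)$\Rightarrow$(ii), where you should record explicitly that reducibility of $\cle_0$ gives $\Phi(e^{it})^*|_{\cle_0}=W^*$ and hence $T_\Phi^* f = W^* f$ for $f\in H_{\cle_0}^2(\D)$, since membership in $\clm_{T_\Phi}$ requires the adjoint condition as well.
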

\begin{proof}
Let us look in the direction $(i) \implies (ii)$. In Theorem \ref{mainthm}, we have shown that $\clm_{T_{\Phi}}$ is a $M_z$-invariant closed subspace of $H_{\cle}^2(\D)$, and thus, by the Beurling-Lax-Halmos's theorem there exists a Hilbert space $\clf$ and an inner function $\Theta(z) \in H_{\clb(\clf, \cle)}^{\infty}(\D)$ such that 
\[
\clm_{T_{\Phi}} = M_{\Theta} H_{\clf}^2(\D).
\]
Now,  if there exists a closed subspace $\cle_0 \subseteq \cle$ such that $\clm_{T_{\Phi}}  = H_{\cle_0}^2(\D)$, then
\[
M_{\Theta} H_{\clf}^2(\D) = H_{\cle_0}^2(\D) = i_{\cle_0} H_{\cle_0}^2(\D),
\]
where $i_{\cle_0} : \cle_0 \hookrightarrow \cle$ is the inclusion map.  Now, from the uniqueness of the Beurling-Lax-Halmos theorem there exists a unitary $A: \clf \raro \cle_0$ such that 
\[
\Theta(z) = i_{\cle_0} A.
\]
From condition (\ref{maincondn}) we know that $\clm_{T_{\Phi}} \neq \{0\}$, if and only if there exists a unitary $U: \clf \raro \clf$ such that
\[
\Phi(e^{it}) \Theta(e^{it}) =  \Theta(e^{it}) U, \quad (\text{ a.e. on } \T).
\]
and therefore, we get that $\Phi(e^{it})|_{\cle_0} = i_{\cle_0} A U A^* i_{\cle_0}^*: \cle_0 \subseteq \cle \raro \cle_0 \subseteq \cle $, a.e. on $\T$. Similarly, the other identity
\[
\Phi(e^{it})^* \Theta(e^{it}) =  \Theta(e^{it}) U^*, \quad (\text{ a.e. on } \T).
\]
gives $\Phi(e^{it})^*|_{\cle_0} = i_{\cle_0} A U^* A^* i_{\cle_0}^*$, a.e. on $\T$. This shows that $\Phi(e^{it})|_{\cle_0}$ is a constant unitary a.e. on $\T$ and moreover,  the above identities imply that for any $\eta \in \cle_0$ and $k \in \Nat$ we get
\[
T_{\Phi}z^k \eta  = L_{\Phi} z^k \eta = L_{\Phi}M_z^k  \eta = M_z^k L_{\Phi} \eta = M_z^k L_{\Phi|_{\cle_0}} \eta = z^k i_{\cle_0} A U A^* i_{\cle_0}^* \eta,
\]
which further implies that 
\[
z^k \eta \in \ker (T_{\Phi}|_{H_{\cle_0}^2(\D)} -  T_W|_{H_{\cle_0}^2(\D)}),
\]
where $W = i_{\cle_0} A U A^* i_{\cle_0}^*$. Since $H_{\cle_0}^2(\D) = \overline{\mbox{span }}\{z^k \eta: k \in \Nat, \eta \in \cle_0\}$, we get
\[
T_{\Phi}|_{H_{\cle_0}^2(\D)}  = T_W|_{H_{\cle_0}^2(\D)}.
\]
Similarly, we will get 
\[
T_{\Phi}^*|_{H_{\cle_0}^2(\D)}  =  T_W^*|_{H_{\cle_0}^2(\D)}.
\]
In other words,  we prove that $T_{\Phi}|_{\clm_{T_{\Phi}}}$ is a constant unitary $T_W$ on $\clm_{T_{\Phi}} = H_{\cle_0}^2(\D)$.
For proving the direction $(ii) \implies (iii)$,  let us first note that $T_{\Phi}|_{\clm_{T_{\Phi}}} = T_W |_{H_{\cle_0}^2(\D)}$, implies that
\[
H_{\cle_0}^2(\D) = T_{W}|_{H_{\cle_0}^2(\D)} H_{\cle_0}^2(\D) = T_{\Phi}|_{\clm_{T_{\Phi}}} H_{\cle_0}^2(\D)  = T_{\Phi} \clm_{T_{\Phi}} =  \clm_{T_{\Phi}}.
\]
The last but one equality follows since $\clm_{T_{\Phi}} \subseteq H_{\cle_0}^2(\D)$.  Thus, we have 
\[
T_{\Phi} |_{H_{\cle_0}^2(\D)}  = T_{W} |_{H_{\cle_0}^2(\D)},
\]
and therefore, 
\[
T_{{(\Phi |_{\cle_0}} - W)} |_{H_{\cle_0}^2(\D)} = 0.
\]
Since, $H_{\cle_0}^2(\D)$ is both $T_{\Phi}$ and $T_{W}$ reducing subspace, we can think $T_{(\Phi |_{\cle_0} - W)}$ as a Toeplitz operator on $H_{\cle_0}^2(\D)$. This implies that
\[
\Phi(e^{it})|_{\cle_0} = W \quad (\text{ a.e. on } \T).
\]
Similarly, we will get
\[
\Phi(e^{it})^*|_{\cle_0} = W^* \quad (\text{ a.e. on } \T).
\]
For the direction, $(iii) \implies (i)$, let $\Phi(e^{it})|_{\cle_0} = W$ (a constant unitary) a.e. on $\T$. Thus for any $\eta \in \cle_0$ and $k \in \Nat$,
\[
T_{\Phi} \eta = P_{H_{\cle}^2(\D)}  L_{\Phi}  \eta = P_{H_{\cle}^2(\D)}   L_{\Phi|_{\cle_0}} \eta = P_{H_{\cle}^2(\D)}   W \eta =  W \eta,
\]
and 
\[
T_{\Phi}^*  \eta = P_{H_{\cle}^2(\D)} L_{\Phi}^*  \eta = P_{H_{\cle}^2(\D)}  L_{\Phi|_{\cle_0}}^* \eta = P_{H_{\cle}^2(\D)} W^* \eta =  W^* \eta,
\]
shows that
\[
\cle_0 \subseteq \clm_{T_{\Phi}}.
\]
Since $\clm_{T_{\Phi}}$ is a $M_z$-invariant closed subspace of $H_{\cle}^2(\D)$, we have $H_{\cle_0}^2(\D) \subseteq \clm_{T_{\Phi}}$. From our assumption we already have $\clm_{T_{\Phi}} \subseteq H_{\cle_0}^2(\D)$, which implies that $\clm_{T_{\Phi}} = H_{\cle_0}^2(\D)$. This completes the proof.
\end{proof}

\begin{rem}
We point out that the gap in the proof of the main theorem in \cite{Butz} lies in Line 17 of page 97 in the following sentence
\[
`` \ldots \text{ Also for } S^*x = \sum_{n=0}^{\infty} S^n c_{n+1} ".
\]
Here, the author had considered $x= \sum_{n=0}^{\infty} S^n c_n$, where $c_n \in C_0$ for all $n \in \Nat$.  The space $C_0 := \clm \ominus S\clm$, where $\clm$ is the unitary part of the generalized Toeplitz operator $T$ on $\clh$. For the above expression (within quotes) to hold one should have $c_0 \in C \cap \clm$. However, the author did not prove this fact. The above expression is then used to prove that $\clm$ is $M_z$-reducing, which is again used to prove that $C_0 = C \cap \clm $ in Line 21.  In a certain sense,  the author's argument is circular and incorrect, as can be seen in the example and main result of this section.
\end{rem}

\section{Analytic Toeplitz operators}\label{analytic-toeplitz}
We shall now focus on analytic Toeplitz operators on $H_{\cle}^2(\D)$. Let us briefly recall that any $\Phi(z) \in H_{\clb(\cle)}^{\infty}(\D)$ is a Schur function that is, $\|\Phi\|_{\infty} \leq 1$, if and only if there exists a transfer-function realization for $\Phi(z)$ (for instance, see \cite[Theorem 1.1]{BBF}). More precisely, there exists an auxiliary Hilbert space $\clk$ and a unitary
\[
W := \begin{bmatrix}
A & B \\
C & D
\end{bmatrix}: \cle \oplus \clk \raro  \cle \oplus \clk
\]
such that 
\begin{equation}\label{transfer}
\Phi(z) := \tau_W(z) = A + z B (I_{\cle} - z D)^{-1} C.
\end{equation}

Since $W$ is unitary we get the following properties:
\begin{enumerate}
\item[(a)] $A^*A + C^*C = I_{\cle} = B^*B + D^*D$,
\item[(b)]$A^*B  + C^*D= 0 = AC^* +BD^*$.
\item[(c)] $D D^* + CC ^*= I_{\cle} = AA^* + BB^*$,
\end{enumerate}

Using this we get the following identities for all $\lambda \in \D$,
\begin{equation}\label{transfer1}
I_{\cle}  - \Phi(\lambda) \Phi(\lambda)^* = (1 - |\lambda|^2) B (I - \lambda D)^{-1} (I - \bar{\lambda} D^*)^{-1}  B^*,
\end{equation}
and
\begin{equation}\label{transfer2}
I_{\cle}  - \Phi(\lambda)^* \Phi(\lambda) = (1 - |\lambda|^2) C^* (I - \bar{\lambda} D^*)^{-1}  (I - \lambda D)^{-1}C.
\end{equation}
The above identities also show that if we begin with a transfer function $\tau_W$, then it is contractive for all $\lambda \in \D$. The transfer-function realization is a fundamental tool used for understanding the properties of analytic Toeplitz operators. Using the identities (\ref{transfer1}) and (\ref{transfer2}),  the following observations equivalent to \cite[Theorem 3.1]{DSau} can be made.
\begin{propn}[Das and Sau]\label{DS}
Let $\Phi(z) \in H_{\clb(\cle)}^{\infty}(\D)$ such that $\|\Phi\|_{\infty} \leq 1$ and $\Phi(0)$ is not a unitary,  then the following statements
\begin{enumerate}
\item[(i)] $\Phi(0) \in \clb(\cle)$, has a  unitary part,
\item[(ii)] $\Phi(\lambda) \in \clb(\cle)$, has a  unitary part for all $\lambda \in \D$,
\item[(iii)] $M_{\Phi} \in \clb(H_{\cle}^2(\D))$, has a unitary part.
\end{enumerate}
have the following implications: $(i) \Longleftrightarrow (ii)$, and $(i) \implies (iii)$.
\end{propn}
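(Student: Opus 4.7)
The plan is to exploit the transfer-function realization $\Phi(z) = A + zB(I_{\clk} - zD)^{-1}C$ together with the kernel information contained in the defect identities (\ref{transfer1}) and (\ref{transfer2}). The first observation I would nail down is that the ``unitarily acting" subspace of $\Phi(\lambda)$ is independent of $\lambda$. Since $(I_{\clk} - \lambda D)$ and $(I_{\clk} - \bar\lambda D^*)$ are invertible for $\lambda \in \D$ (as $\|D\|\leq 1$), the identities (\ref{transfer1}) and (\ref{transfer2}) immediately give, for every $\lambda \in \D$,
\[
\ker\bigl(I_{\cle} - \Phi(\lambda)\Phi(\lambda)^*\bigr) = \ker B^* = \ker\bigl(I_{\cle} - AA^*\bigr),
\]
\[
\ker\bigl(I_{\cle} - \Phi(\lambda)^*\Phi(\lambda)\bigr) = \ker C = \ker\bigl(I_{\cle} - A^*A\bigr),
\]
where the last equalities use $\lambda = 0$ together with $I_{\cle} - AA^* = BB^*$ and $I_{\cle} - A^*A = C^*C$ from the unitarity of $W$.

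For $(ii) \Rightarrow (i)$ I would simply specialize to $\lambda = 0$. For $(i) \Rightarrow (ii)$, let $\cle_0 \subseteq \cle$ be an $A$-reducing subspace on which $A|_{\cle_0}$ is unitary. Then $\cle_0 \subseteq \ker B^* \cap \ker C$. Plugging $C\eta = 0$ into the formula for $\Phi$ yields $\Phi(\lambda)\eta = A\eta$ for every $\eta \in \cle_0$, $\lambda \in \D$; plugging $B^*\eta = 0$ into the analogous formula $\Phi(\lambda)^* = A^* + \bar\lambda C^*(I_{\clk} - \bar\lambda D^*)^{-1}B^*$ yields $\Phi(\lambda)^*\eta = A^*\eta$. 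Since $A\cle_0, A^*\cle_0 \subseteq \cle_0$, it follows that $\cle_0$ is $\Phi(\lambda)$-reducing with $\Phi(\lambda)|_{\cle_0} = A|_{\cle_0}$, a (constant) unitary. In particular, $\cle_0$ is a unitary part for $\Phi(\lambda)$.

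For $(i) \Rightarrow (iii)$, with the same $\cle_0$, I would show that $H_{\cle_0}^2(\D) \subseteq H_{\cle}^2(\D)$ is $M_\Phi$-reducing and that $M_\Phi$ restricted to it is the constant unitary $M_A|_{H_{\cle_0}^2(\D)}$. Invariance is immediate: for $f = \sum_n a_n z^n$ with $a_n \in \cle_0$, the identity $\Phi(z)a_n = Aa_n \in \cle_0$ gives $M_\Phi f = M_A f \in H_{\cle_0}^2(\D)$. For $M_\Phi^*$-invariance I would use that $\Phi(z)^*\eta = A^*\eta$ is constant in $z$ for $\eta \in \cle_0$, so the Fourier coefficients of $\Phi^*$ acting on $\eta$ are $\Phi_0^*\eta = A^*\eta$ and $\Phi_k^*\eta = 0$ for $k \geq 1$; a direct computation of the adjoint (using $\langle M_\Phi^* g, f\rangle = \langle g, M_\Phi f\rangle$ on monomials) then gives $M_\Phi^*(z^n\eta) = z^n A^*\eta$, which lies in $H_{\cle_0}^2(\D)$. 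The main (mild) obstacle is verifying this $M_\Phi^*$-invariance cleanly; once that is in hand, $M_\Phi|_{H_{\cle_0}^2(\D)} = M_A|_{H_{\cle_0}^2(\D)}$ is visibly a constant unitary since $A|_{\cle_0}$ is unitary on $\cle_0$, and $H_{\cle_0}^2(\D)$ is a non-zero unitary part of $M_\Phi$.
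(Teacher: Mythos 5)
Your proof is correct and follows exactly the route the paper intends: the paper states this proposition without proof, attributing it to Das--Sau and remarking only that it can be read off from the defect identities (\ref{transfer1}) and (\ref{transfer2}) of the realization. Your write-up supplies precisely those details --- the kernels of $I_{\cle}-\Phi(\lambda)\Phi(\lambda)^*$ and $I_{\cle}-\Phi(\lambda)^*\Phi(\lambda)$ equal $\ker B^*$ and $\ker C$ independently of $\lambda$, and an $A$-reducing unitary subspace $\cle_0\subseteq\ker B^*\cap\ker C$ is then $\Phi(\lambda)$-reducing with $\Phi(\lambda)|_{\cle_0}=A|_{\cle_0}$, so that $H_{\cle_0}^2(\D)$ is a unitary part of $M_{\Phi}$.
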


Let us note down a result that can deduced from the above characterization. We believe this observation is new and independently interesting.
\begin{propn}
Let $\Phi(z) \in H_{\clb(\cle)}^{\infty}(\D)$ such that $\|\Phi\|_{\infty} \leq 1$. Furthermore, assume that $\Phi(0)$ is an isometry on $\cle$. Then either $M_{\Phi}$ has a unitary part or $M_{\Phi}$ is a $C_{\cdot 0}$-contraction, that is, $\{M_{\Phi}^{*n }\}_n \raro 0$ in the strong operator topology.
\end{propn}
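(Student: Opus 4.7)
My approach is to reduce to the case where $\Phi$ is constant by exploiting the transfer-function realization, and then read off the dichotomy directly from the Wold decomposition of that constant. The first (and key) step would be to show that the hypothesis forces $\Phi \equiv \Phi(0)$: writing $\Phi(z) = A + zB(I_{\cle} - zD)^{-1}C$ as in (\ref{transfer}), so that $A = \Phi(0)$, I specialize identity (\ref{transfer2}) at $\lambda = 0$ to obtain $I_{\cle} - \Phi(0)^*\Phi(0) = C^*C$. Since $\Phi(0)$ is an isometry the left side vanishes, so $C = 0$ and hence $\Phi(z) = A$ for every $z \in \D$. This reduces the theorem to showing that, for a constant analytic Toeplitz operator $T_A$ with $A \in \clb(\cle)$ an isometry, either $T_A$ has a unitary part or $T_A^{*n} \raro 0$ strongly.

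The second step is to apply the Wold decomposition to $A$: there is an orthogonal $A$-reducing splitting $\cle = \cle_s \oplus \cle_u$ with $A|_{\cle_s}$ a pure isometry (so $(A|_{\cle_s})^{*n} \raro 0$ strongly) and $A|_{\cle_u}$ unitary. If $\cle_u \neq \{0\}$, then $H_{\cle_u}^2(\D) \subseteq H_{\cle}^2(\D)$ is $T_A$-reducing (the Taylor coefficients of $T_A f$ and $T_A^* f$ are obtained by applying $A$ and $A^*$ termwise, and $\cle_u$ is both $A$- and $A^*$-invariant), and $T_A$ restricted to it is the constant Toeplitz operator with unitary symbol $A|_{\cle_u}$, which is a unitary by Corollary \ref{unitoep}; hence $M_{\Phi}$ has a unitary part. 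If instead $\cle_u = \{0\}$, then $A^{*n}x \raro 0$ for every $x \in \cle$, and for any $f = \sum_{k \geq 0} z^k x_k \in H_{\cle}^2(\D)$ the termwise action gives $\|M_{\Phi}^{*n} f\|^2 = \sum_k \|A^{*n} x_k\|^2$, which tends to zero by dominated convergence (using $\|A^{*n} x_k\|^2 \leq \|x_k\|^2$ and $\sum_k \|x_k\|^2 < \infty$). So $M_{\Phi}$ is $C_{\cdot 0}$.

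The main obstacle---really the only non-routine point---is Step~1, where the transfer-function identity at the origin is used to force $\Phi$ to be constant. After that, the dichotomy is an immediate consequence of the Wold decomposition of $\Phi(0)$, and no further subtleties arise.
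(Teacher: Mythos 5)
Your proof is correct, but it takes a genuinely different route from the paper's. The paper keeps $\Phi$ non-constant in principle: it applies the Wold decomposition to $\Phi(0)$ and then invokes two external results --- Proposition \ref{DS} (existence of a unitary part for $\Phi(0)$ implies one for $M_{\Phi}$) and \cite[Theorem 3.4]{Sarkar} ($M_{\Phi}$ is $C_{\cdot 0}$ iff $\Phi(0)$ is). You instead observe at the outset that the hypotheses are far more rigid than they look: evaluating (\ref{transfer2}) at $\lambda=0$ (equivalently, using property (a) of the unitary $W$) gives $I_{\cle}-\Phi(0)^*\Phi(0)=C^*C$, so the isometry assumption forces $C=0$ and hence $\Phi\equiv\Phi(0)$. (The same conclusion follows realization-free from the maximum modulus principle applied to $z\mapsto\langle\Phi(z)x,\Phi(0)x\rangle$ together with the equality case of Cauchy--Schwarz.) After that reduction, your Wold-decomposition argument for the constant symbol $A$ is elementary and complete: the termwise action of $T_A$ and $T_A^*$ on Taylor coefficients makes $H^2_{\cle_u}(\D)$ reducing with unitary restriction when $\cle_u\neq\{0\}$ (via Corollary \ref{unitoep}), and the dominated-convergence computation handles the pure case. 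What each approach buys: the paper's proof is shorter given the cited machinery and its two implications remain meaningful without the isometry hypothesis, whereas your argument is self-contained and exposes that under these hypotheses the proposition is really a statement about constant Toeplitz operators --- a structural fact the paper's proof does not reveal.
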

\begin{proof}
Since $\Phi(0)$ is a isometry on $\cle$ by the Wold-von Neumann decomposition $\Phi(0)$ has  a unitary part or $\Phi(0)$ is a $C_{\cdot 0}$-isometry (also known as a \textit{shift} operator). From the above Proposition \ref{DS}, if $\Phi(0)$ has a unitary part then $M_{\Phi}$ has a unitary part.  The proof gets complete by using \cite[Theorem 3.4]{Sarkar} which shows that $M_{\Phi}$ on $H_{\cle}^2(\D)$ is a $C_{\cdot 0}$-contraction if and only $\Phi(0)$ is a $C_{\cdot 0}$-contraction.
\end{proof}
From Proposition \ref{DS}, it is clear that the missing piece and the most important direction is whether $(iii) \implies (i)$.  There is an example in \cite[Remark 3.2]{DSau}, where the authors have shown this does not hold in general.  It is evident from this discussion that we need to first analyze when $\Phi(0) \in \clb(\cle)$ has a unitary part. This is the content of the following result. 

\begin{propn}\label{analyticmainprop}
Let $\Phi(z) \in H_{\clb(\cle)}^{\infty}(\D)$ such that $\|\Phi\|_{\infty} \leq 1$, then the following are equivalent
\begin{enumerate}
\item[(i)] the operator $\Phi(0)$ on $\cle$ has a unitary part,
\item[(ii)] the constant Toeplitz operator $T_{\Phi(0)}$ on $H_{\cle}^2(\D)$ has a unitary part,
\item[(iii)] there exists a Hilbert space $\clf$, an inner function $\Theta(z) \in H_{\clb(\clf,\cle)}^{\infty}(\D)$, and a unitary $U: \clf \raro \clf$ such that
\begin{equation}\label{point1}
\Phi(0) \Theta(\lambda) = \Theta(\lambda) U; \quad \Phi(0)^* \Theta(\lambda) = \Theta(\lambda) U^* \quad (\forall \lambda \in \D).
\end{equation}
\item[(iv)] there exists a Hilbert space $\clf$, an inner function $\Theta(z) \in H_{\clb(\clf,\cle)}^{\infty}(\D)$, and a unitary $U: \clf \raro \clf$ such that
\begin{equation}\label{point2}
\Phi(\lambda) \Theta(\lambda) = \Theta(\lambda) U; \quad \Phi(\lambda)^* \Theta(\lambda) = \Theta(\lambda) U^* \quad (\forall \lambda \in \D).
\end{equation}
\end{enumerate}
\end{propn}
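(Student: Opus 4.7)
The plan is to prove the chain of equivalences by closing a ring through condition (i), so that each of (ii), (iii), (iv) need only be related to (i). The equivalence $(i) \Leftrightarrow (ii)$ is the most direct, since the constant Toeplitz operator $T_{\Phi(0)}$ acts diagonally on Taylor coefficients: for $f = \sum_n a_n z^n \in H_{\cle}^2(\D)$, we have $\|T_{\Phi(0)}^k f\|^2 = \sum_n \|\Phi(0)^k a_n\|^2$ and a symmetric identity for $T_{\Phi(0)}^{*k}$, so $f$ lies in the unitary part of $T_{\Phi(0)}$ if and only if every $a_n$ lies in the unitary part $\cle_0$ of $\Phi(0)$. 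This identifies the unitary part of $T_{\Phi(0)}$ with $H_{\cle_0}^2(\D)$ and shows that either subspace is nontrivial exactly when the other is.

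For the forward implications $(i) \Rightarrow (iii)$ and $(i) \Rightarrow (iv)$ my plan is to use a common construction: take $\clf = \cle_0$, $U = \Phi(0)|_{\cle_0}$ (a unitary by hypothesis), and $\Theta(z) \equiv i_{\cle_0}: \cle_0 \hookrightarrow \cle$, the constant inclusion, which is a trivial inner function. Condition (iii) is then immediate since $\cle_0$ is $\Phi(0)$-reducing and $\Phi(0)|_{\cle_0} = U$. For (iv) I would invoke the transfer-function realization $\Phi(z) = A + zB(I - zD)^{-1}C$ associated with the block unitary $W$: the identities $A^*A + C^*C = I$ and $AA^* + BB^* = I$, together with the unitarity of $A|_{\cle_0}$, force $C\xi = 0$ and $B^*\xi = 0$ for every $\xi \in \cle_0$. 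Substituting these into (\ref{transfer}) yields $\Phi(\lambda)\xi = A\xi$ and $\Phi(\lambda)^*\xi = A^*\xi$ pointwise on $\D$, which is exactly (iv) with the same $\Theta$ and $U$.

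The converses $(iii) \Rightarrow (i)$ and $(iv) \Rightarrow (i)$ both hinge on the same calculation. Whenever the relations $\Phi(\mu)\Theta(\mu) = \Theta(\mu) U$ and $\Phi(\mu)^*\Theta(\mu) = \Theta(\mu) U^*$ hold at a particular $\mu \in \D$, composing them gives
\[
\Phi(\mu)^*\Phi(\mu)\Theta(\mu) = \Theta(\mu) = \Phi(\mu)\Phi(\mu)^*\Theta(\mu),
\]
so $\overline{\ran(\Theta(\mu))}$ is a $\Phi(\mu)$-reducing subspace on which $\Phi(\mu)$ acts as a unitary. I would apply this at $\mu = 0$ in both cases: (iii) provides the relations at $\mu = 0$ directly, while (iv) at $\mu = 0$ yields $\Phi(0)\Theta(0) = \Theta(0) U$ and $\Phi(0)^*\Theta(0) = \Theta(0) U^*$. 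Provided $\Theta(0) \neq 0$, this immediately exhibits $\overline{\ran(\Theta(0))}$ as a nontrivial unitary subspace for $\Phi(0)$, yielding (i).

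The main obstacle in both converses is the edge case $\Theta(0) = 0$, which can genuinely occur for inner operator functions (e.g.\ $\Theta(z) = zI$). I would handle it by factoring $\Theta(z) = z^m \wt{\Theta}(z)$, where $m$ is the order of vanishing at the origin, verifying that $\wt{\Theta}$ remains inner on $\T$ (since $|e^{-imt}| = 1$) and that $\wt{\Theta}(0) \neq 0$ by construction. Dividing the intertwining relations of (iii) or (iv) through by $\lambda^m$ on $\D \setminus \{0\}$ and extending by continuity to $\lambda = 0$ shows that $\wt{\Theta}$ satisfies the same relations with the same unitary $U$, so applying the previous paragraph's argument to $\wt{\Theta}(0)$ completes the proof.
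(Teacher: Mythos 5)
Your proposal is correct, but it takes a genuinely different route from the paper. The paper proves the chain $(i)\Rightarrow(ii)\Rightarrow(iii)\Rightarrow(i)$ together with a direct, witness-preserving equivalence $(iii)\Leftrightarrow(iv)$: for $(ii)\Rightarrow(iii)$ it applies the Beurling--Lax--Halmos theorem to the ($M_z$-invariant) unitary part of $T_{\Phi(0)}$ and then uses Lemma \ref{isom} and Corollary \ref{unitoep} to extract the constant unitary, while for $(iii)\Leftrightarrow(iv)$ it uses the realization identities \eqref{transfer1}--\eqref{transfer2} to show $C\Theta(\lambda)=0=B^*\Theta(\lambda)$, so that the \emph{same} $\Theta$ and $U$ serve in both conditions. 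You instead close every implication through $(i)$: your $(i)\Leftrightarrow(ii)$ via the diagonal action on Taylor coefficients is sharper than what the paper records (it identifies the unitary part of $T_{\Phi(0)}$ as exactly $H^2_{\cle_0}(\D)$), and your forward implications replace Beurling--Lax--Halmos by the explicit constant inner function $i_{\cle_0}$ with $U=\Phi(0)|_{\cle_0}$, which is more elementary; your use of the realization formula to kill $C$ and $B^*$ on $\cle_0$ for $(i)\Rightarrow(iv)$ is the same computation the paper performs, just run from the subspace rather than from the intertwining relations. The one place where you genuinely add an ingredient is $(iv)\Rightarrow(i)$: evaluating at $\lambda=0$ forces you to confront $\Theta(0)=0$, and your factorization $\Theta(z)=z^m\wt\Theta(z)$ (with $\wt\Theta$ still inner and satisfying the same intertwining by continuity) is a valid workaround; the paper sidesteps this entirely by first proving $(iv)\Rightarrow(iii)$, after which one may evaluate at \emph{any} point where $\Theta$ does not vanish. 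The only thing your route does not deliver is the statement that a single $\Theta$ witnesses both $(iii)$ and $(iv)$ simultaneously -- your detour through $(i)$ swaps the given witness for the canonical constant one -- and that witness-preserving form is what the paper quietly relies on later in the proof of Theorem \ref{analyticmainthm}; for the proposition as stated, however, your argument is complete.
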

\begin{proof}
Our approach is to show the following directions: $(i) \implies (ii) \implies (iii) \implies (i)$ and $
(iii) \Longleftrightarrow (iv)$. It is easy to see that when $\Phi(0)$ has a unitary part in $\cle$, then the operator $T_{\Phi(0)}$ on $H_{\cle}^2(\D)$ has a unitary part. In particular, $\clm_{\Phi(0)} \subseteq \clm_{T_{\Phi(0)}}$. This implies the direction $(i) \implies (ii)$. Now let us look at $(ii) \implies (iii)$.  Let $\clm_{T_{\Phi(0)}}$ denote the unitary part of the operator $T_{\Phi(0)}$ on $H_{\cle}^2(\D)$. Note that
\[
\clm_{T_{\Phi(0)}} = \{f \in H_{\cle}^2(\D): \|\Phi(0)^n f \| = \|f\| = \|\Phi(0)^{*n} f\|, \forall n \in \Nat\}
\]
It is clear from the above description that $\clm_{T_{\Phi(0)}}$ is a $M_z$-invariant closed subspace of $H_{\cle}^2(\D)$. From assumption, $\clm_{T_{\Phi(0)}} \neq \{0\}$, and therefore, by the Beurling-Lax-Halmos theorem, there exists a Hilbert space $\clf$ and an inner function $\Theta(z) \in H_{\clb(\clf,\cle)}^{\infty}(\D)$ such that
\[
\clm_{T_{\Phi(0)}} = M_{\Theta} H_{\clf}^2(\D).
\]
Since $\clm_{T_{\Phi(0)}}$ is $T_{\Phi(0)}$-reducing, we get, from Lemma \ref{isom},
\[
T_{\Phi(0)}  M_{\Theta} M_{\Theta}^*= M_{\Theta}M_{\Theta}^* T_{\Phi(0)}.
\]
which implies that
\[
 T_{\Phi(0)}  M_{\Theta} = M_{\Theta}M_{\Theta}^* T_{\Phi(0)} M_{\Theta}.
\]
Since $T_{\Phi(0)}$ is unitary on $M_{\Theta} H_{\clf}^2(\D)$, the operator $M_{\Theta}^* T_{\Phi(0)} M_{\Theta}$ is an isometry.  In the same manner, 
\[
 T_{\Phi(0)}^* M_{\Theta}= M_{\Theta}M_{\Theta}^*  T_{\Phi(0)}^*  M_{\Theta}.
\]
will imply that $M_{\Theta}^* T_{\Phi(0)}^* M_{\Theta}$ is an isometry. From Corollary \ref{unitoep}, there must exist an unitary (say) $U:\clf \raro \clf$ such that 
\[
M_{\Theta}^* T_{\Phi(0)} M_{\Theta} = T_U.
\]
and thus, 
\[
M_{\Theta} M_{\Theta}^* T_{\Phi(0)} M_{\Theta} = M_{\Theta} T_U.
\]
Since $M_{\Theta} H_{\clf}^2(\D)$ is $T_{\Phi(0)}$-reducing, we get
\[
T_{\Phi(0)} M_{\Theta} = M_{\Theta} T_U,
\]
which further implies that 
\[
\Phi(0) \Theta(\lambda) = \Theta(\lambda) U \quad (\lambda \in \D).
\]
Similarly, from 
\[
M_{\Theta}^* T_{\Phi(0)}^* M_{\Theta} = T_U^*.
\]
we have
\[
\Phi(0)^* \Theta(\lambda) =  \Theta(\lambda) U^* \quad (\lambda \in \D).
\]
This completes the proof for the direction $(ii) \implies (iii)$. For the direction $(iii) \implies (i)$, this is straightforward, since $\Theta(z) \not\equiv 0$, there exists $\lambda \in \D$ for which $\Theta(\lambda) \neq 0$. For this $\lambda$, the condition 
\[
\Phi(0) \Theta(\lambda) = \Theta(\lambda)U; \quad \Phi(0)^* \Theta(\lambda) = \Theta(\lambda)U^*.
\]
implies that for all $n \in \Nat$,
\[
\Phi(0)^{n} \Phi(0)^{*n} \Theta(\lambda)  = \Theta(\lambda); \quad \Phi(0)^{*n} \Phi(0)^{n} \Theta(\lambda) =  \Theta(\lambda).
\]
This implies that $\Phi(0)$ has a unitary part containing the range of  $\Theta(\lambda)$. Now let us prove  $(iii) \implies (iv)$. Note that conditions (\ref{point1}) imply that
\[
M_{\Theta}^* T_{\Phi(0)}^* T_{\Phi(0)}M_{\Theta} = M_{\Theta}^*M_{\Theta} = I_{H_{\clf}^2(\D)}.
\]
Similarly, we get
\[
M_{\Theta}^* T_{\Phi(0)} T_{\Phi(0)}^*M_{\Theta} = M_{\Theta}^*M_{\Theta} = I_{H_{\clf}^2(\D)}.
\]
Now, note that 
\[
M_{\Theta}^* T_{\Phi(0)}^* T_{\Phi(0)}M_{\Theta} = M_{\Theta}^*  T_A^* T_A M_{\Theta}  = M_{\Theta}^* M_{\Theta} - M_{\Theta}^*  T_C^* T_C M_{\Theta},
\]
The last equality follows from property $(a)$ corresponding to the unitary $W$. Thus, we get 
\[
M_{\Theta}^*  T_C^* T_C M_{\Theta} = 0,
\]
which further implies that $T_C M_{\Theta} = 0$, and therefore, $C \Theta(\lambda) = 0$ for all $\lambda \in \D$.  Using the transfer-function formula (\ref{transfer}) we get
\[
\Phi(\lambda) \Theta(\lambda) = ( A + \lambda B (I_{\cle} - \lambda D)^{-1} C)\Theta(\lambda) = A \Theta(\lambda) = \Phi(0) \Theta(\lambda) = \Theta(\lambda) U.
\]
Similarly, note from condition $(c)$ that 
\[
M_{\Theta}^* T_{\Phi(0)} T_{\Phi(0)}^*M_{\Theta} = M_{\Theta}^*  T_A T_A^* M_{\Theta}  = M_{\Theta}^* M_{\Theta} - M_{\Theta}^*  T_B T_B^* M_{\Theta},
\]
Thus, we get 
\[
M_{\Theta}^*  T_B T_B^* M_{\Theta} = 0,
\]
which further implies that $ T_B^* M_{\Theta} = 0$, and therefore, $B^* \Theta(\lambda) = 0$ for all $\lambda \in \D$. This implies that 
\[
\Phi(\lambda)^* \Theta(\lambda) = ( A^* + \bar{\lambda} C^* (I_{\cle} - \bar{\lambda} D^*)^{-1} B^*)\Theta(\lambda) = A^* \Theta(\lambda) = \Phi(0)^* \Theta(\lambda) = \Theta(\lambda) U^*.
\]
This completes the proof of $(iii) \implies (iv)$. Now, if 
\[
\Phi(\lambda) \Theta(\lambda) = \Theta(\lambda) U; \quad \Phi(\lambda)^* \Theta(\lambda) = \Theta(\lambda) U^*
\]
is satisfied for all $\lambda \in \D$, then
\[
(I_{\cle} - \Phi(\lambda) \Phi(\lambda)^*) \Theta(\lambda) = \Theta(\lambda) -\Phi(\lambda) \Phi(\lambda)^*  \Theta(\lambda)
=  \Theta(\lambda) - \Theta(\lambda) U U^*
=0.
\]
and therefore,
\[
0 = \Theta(\lambda)^* (I_{\cle} - \Phi(\lambda) \Phi(\lambda)^*) \Theta(\lambda) = (1 - |\lambda|^2)  \Theta(\lambda)^* B (I - \lambda D)^{-1}  (I - \bar{\lambda}  D^*)^{-1}  B^* \Theta(\lambda).
\]
This implies that $B^* \Theta(\lambda) = 0$ for all $\lambda \in \D$. Similarly, the above conditions imply that
\[
(I_{\cle} - \Phi(\lambda^*) \Phi(\lambda)) \Theta(\lambda) = \Theta(\lambda) -\Phi(\lambda)^* \Phi(\lambda) \Theta(\lambda) 
=  \Theta(\lambda) - \Theta(\lambda) U^* U^*
=0,
\]
and therefore,
\[
0 = \Theta(\lambda)^* (I_{\cle} - \Phi(\lambda)^* \Phi(\lambda)^*) \Theta(\lambda) = (1 - |\lambda|^2)  \Theta(\lambda)^* C^* (I - \bar{\lambda} D^*)^{-1}  (I - \lambda  D)^{-1}  C \Theta(\lambda).
\]
This implies that $C \Theta(\lambda) = 0$ for all $\lambda \in \D$. Thus, we get
\[
\Phi(0) \Theta(\lambda) = \Phi(\lambda) \Theta(\lambda) = \Theta(\lambda) U 
\]
and
\[
\Phi(0)^* \Theta(\lambda) = \Phi(\lambda)^* \Theta(\lambda) = \Theta(\lambda) U^*. 
\]
This completes the proof that $(iv) \implies (iii)$, and also the proof of this proposition.
\end{proof}
Now we are ready to establish the conditions that force $\Phi(0)$ to have a unitary part. 
\begin{thm}\label{analyticmainthm}
Let $\Phi \in H_{\clb(\cle)}^{\infty}(\D)$ with $\|\Phi\|_{\infty} \leq 1$.  If $M_{\Phi}$ has a unitary part $\clm_{M_{\Phi}}$, then the condition $\| T_{\Phi(0)} f \| = \|f\|$ for all $f \in \clm_{M_{\Phi}}$ implies that $\Phi(0)$ has a  unitary part.
\end{thm}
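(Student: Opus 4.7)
The plan is to combine Theorem~\ref{mainthm}, the transfer-function realization of $\Phi$, and Proposition~\ref{analyticmainprop}. Applying Theorem~\ref{mainthm} to $M_\Phi$, I would first write the unitary part as $\clm_{M_\Phi} = M_\Theta H_\clf^2(\D)$ for some Hilbert space $\clf$ and inner function $\Theta \in H_{\clb(\clf,\cle)}^\infty(\D)$, together with a unitary $U:\clf \raro \clf$ satisfying the identities in (\ref{maincondn}). Since $\Phi$ is analytic, both $\Phi \Theta$ and $\Theta U$ belong to $H_{\clb(\clf,\cle)}^\infty(\D)$ and share the same boundary values, so by uniqueness of bounded analytic functions,
\[
\Phi(\lambda)\Theta(\lambda) = \Theta(\lambda) U \quad (\lambda \in \D).
\]
Write $A := \Phi(0)$, which satisfies $\|A\| \le 1$, and use the transfer-function realization $\Phi(\lambda) = A + \lambda B(I_\cle - \lambda D)^{-1} C$ from (\ref{transfer}), together with the unitarity relations (a)--(c) of the block unitary $W$.

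Next, I would translate the hypothesis $\|T_{\Phi(0)} f\| = \|f\|$ on $\clm_{M_\Phi}$ into an operator identity. For $f = M_\Theta g$ with $g \in H_\clf^2(\D)$, the condition becomes $M_\Theta^* T_A^* T_A M_\Theta = I_{H_\clf^2(\D)}$, or equivalently $T_{I_\clf - \Theta^* A^* A \Theta} = 0$ on $H_\clf^2(\D)$. Because $\|A\| \le 1$ and $\Theta(e^{it})$ is isometric a.e., the symbol $I_\clf - \Theta(e^{it})^* A^* A \Theta(e^{it})$ is positive semi-definite a.e., and a positive Toeplitz operator with positive symbol vanishes iff its symbol is zero; hence
\[
\Theta(e^{it})^* A^* A \Theta(e^{it}) = I_\clf \quad (\text{a.e.\ on } \T).
\]

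Now I would promote this boundary identity to an identity on $\D$. Setting $Q := (I_\cle - A^*A)^{1/2} \in \clb(\cle)$, the previous display rewrites as $(Q\Theta(e^{it}))^*(Q\Theta(e^{it})) = 0$, so $Q\Theta(e^{it}) = 0$ a.e. Since $Q\Theta \in H_{\clb(\clf,\cle)}^\infty(\D)$, uniqueness of its boundary values yields $Q\Theta(\lambda) = 0$, and therefore $A^*A\,\Theta(\lambda) = \Theta(\lambda)$ for every $\lambda \in \D$. Relation~(a) of $W$ then gives $C^*C\,\Theta(\lambda) = 0$, so $C\,\Theta(\lambda) = 0$ on $\D$. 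Substituting into the realization collapses $\Phi(\lambda)\Theta(\lambda)$ to $A\,\Theta(\lambda)$, and combined with $\Phi\Theta = \Theta U$ this gives $A\,\Theta(\lambda) = \Theta(\lambda) U$. Applying $A^*$ and using $A^*A\,\Theta = \Theta$ produces $A^*\,\Theta(\lambda) = \Theta(\lambda) U^*$; both identities of (\ref{point1}) in Proposition~\ref{analyticmainprop} are in force, and the equivalence (iii)$\Leftrightarrow$(i) of that proposition then yields a unitary part for $\Phi(0)$.

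The main obstacle is the promotion step: the Toeplitz-level identity $\Theta(e^{it})^* A^* A \Theta(e^{it}) = I_\clf$ only asserts that $A^*A$ restricts to the identity on $\ran \Theta(e^{it})$ a.e., which is a priori strictly weaker than the disc-wide operator identity $A^*A\,\Theta(\lambda) = \Theta(\lambda)$ required to feed Proposition~\ref{analyticmainprop}. The square-root device via $Q$ together with the uniqueness of bounded analytic $\clb(\clf,\cle)$-valued functions from their boundary values is precisely what bridges this gap; once $A^*A\,\Theta \equiv \Theta$ is in hand, the algebraic relations (a)--(c) of the colligation $W$ propagate the unitary structure from $\Theta$ to $A$ mechanically.
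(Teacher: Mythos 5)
Your proposal is correct and follows essentially the same route as the paper: invoke Theorem \ref{mainthm} and analyticity to get $\Phi(\lambda)\Theta(\lambda)=\Theta(\lambda)U$ on $\D$, translate the isometry hypothesis into $M_{\Theta}^{*}T_{\Phi(0)}^{*}T_{\Phi(0)}M_{\Theta}=I$, deduce $C\Theta(\lambda)=0$ via the colligation relation $A^{*}A+C^{*}C=I_{\cle}$, and feed the resulting identities into Proposition \ref{analyticmainprop}. The only (harmless) variation is your square-root/boundary-uniqueness device for promoting the operator identity to $A^{*}A\Theta\equiv\Theta$, where the paper instead argues directly that $T_{C}M_{\Theta}=0$.
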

\begin{proof}
Since $M_{\Phi}$ has a unitary part, from Theorem \ref{mainthm}, we know that there must exist a Hilbert space $\clf$ and a unitary $U:\clf \raro \clf$ such that
\[
\Phi(e^{it}) \Theta(e^{it}) = \Theta(e^{it}) U; \quad \Phi(e^{it})^* \Theta(e^{it}) = \Theta(e^{it}) U^* \quad (\text{ a.e.  on } \T).
\]
Now $\Phi(z)$ is a bounded analytic function further implies that 
\begin{equation}\label{p1}
\Phi(\lambda) \Theta(\lambda) = \Theta(\lambda) U \quad (\forall \lambda \in \D).
\end{equation}
From Proposition \ref{analyticmainprop}, it is clear that we have to find conditions that gives 
\begin{equation}\label{p2}
\Phi(\lambda)^* \Theta(\lambda) = \Theta(\lambda) U^* \quad (\forall \lambda \in \D).
\end{equation}
We claim that condition (\ref{p1}) will imply condition (\ref{p2}) if $\|T_{\Phi(0)} f\| = \|f\|$ for all $f \in \clm_{M_{\Phi}}$.  To prove this claim, let us first fix a $\lambda \in \D$ and note that from condition (\ref{p1}), we get
\[
\Phi(\lambda)^* \Phi(\lambda) \Theta(\lambda)U^* = \Phi(\lambda)^* \Theta(\lambda)
\]
and thus, 
\[
- \big(I - \Phi(\lambda)^* \Phi(\lambda) \big) \Theta(\lambda)U^*  = \Phi(\lambda)^* \Theta(\lambda) - \Theta(\lambda)U^*.
\]
This is equivalent to
\[
- (1 - |\lambda|^2) C^* (I - \bar{\lambda} D^*)^{-1}  (I - \lambda D)^{-1}C\Theta(\lambda)U^* = \Phi(\lambda)^* \Theta(\lambda) - \Theta(\lambda)U^*.
\]
Hence, $ \Phi(\lambda)^* \Theta(\lambda) = \Theta(\lambda)U^*$ if and only if 
\[
C^* (I - \bar{\lambda} D^*)^{-1}  (I - \lambda D)^{-1}C\Theta(\lambda)=0.
\]
This is again equivalent to 
\[
\Theta(\lambda)^* C^* (I - \bar{\lambda} D^*)^{-1}  (I - \lambda D)^{-1}C\Theta(\lambda)=0,
\]
and therefore, equivalent to the condition 
\[
(I - \lambda D)^{-1}C\Theta(\lambda)=0.
\]
Now acting on the left by $(I - \lambda D)$, we get, $C\Theta(\lambda)=0$. Thus, 
\[
\Phi(\lambda)^* \Theta(\lambda) = \Theta(\lambda)U^* \text{ if and only if }C \Theta(\lambda) = 0,
\] 
Using the fact that $W$ is a unitary, we know $A^*A + C^* C = I_{\cle}$. Thus, $C \Theta(\lambda) = 0$ is equivalent to
\[
A^*A \Theta(\lambda) =  \Theta(\lambda) \quad (\lambda \in \D),
\]
Since $\lambda \in \D$ was arbitrary, we observe that condition (\ref{p2}) holds if and only if $A^*A \Theta(\lambda) =  \Theta(\lambda)$ for all $\lambda \in \D$. The later condition is equivalent to $T_A^*T_A M_{\Theta} = M_{\Theta}$, which is again equivalent to 
\[
M_{\Theta}^* T_A^* T_A M_{\Theta} =  I_{H_{\clf}^2(\D)}
\]
or in other words, 
\[
\|T_{\Phi(0)} f\| = \|T_{A} f\| = \|f\|.
\]
for all $f \in \clm_{T_{\Phi}}$.  This is because, $M_{\Theta}^* T_A^* T_A M_{\Theta} =  I_{H_{\clf}^2(\D)}$ will imply that $M_{\Theta}^* T_C^* T_C M_{\Theta} = 0$, that is, $T_C M_{\Theta} = 0$, which is again equivalent to $T_A^*T_A M_{\Theta} =  M_{\Theta}$. This completes the proof.
\end{proof}
It is evident from the above result that we need to consider constant operator $T_{\Phi(0)}$ on the full Hardy space $H_{\cle}^2(\D)$ for finding the correspondence between the unitary part of $\Phi(0)$ and the unitary part of $T_{\Phi}$ on $H_{\cle}^2(\D)$.  We will now focus on a special class of Schur functions. In particular, we consider transfer functions corresponding to the following unitary,
\[
W:= \begin{bmatrix}
A & B\\
C & 0
\end{bmatrix}: \cle \oplus \clk \raro \cle \oplus \clk,
\]
that is, $\Phi(z):= \tau_W(z) = A + zBC$.  Here, we have the following properties.
\begin{enumerate}
\item[(a)] $A^*A + C^*C = I_{\cle} = AA^* + BB^*$,
\item[(b)] $A^*B = 0 = AC^*$,
\item[(c)] $B$ and $C^*$ are isometries, and $A$ is a partial isometry on $\cle$.
\end{enumerate}
Although the above class of functions seems restrictive, it serves an important role in finding analytic models for abstract operators on Hilbert spaces. For instance, Berger, Coburn, and Lebow in \cite{BCL} proved that tuple of commuting isometries whose product is a shift is unitarily equivalent to tuple of Toeplitz operators on certain $H_{\cle}^2(\D)$, corresponding to symbols of the form $U(zP+P^{\perp})$, where $P$ is an orthogonal projection and $U$ is a unitary on $\cle$. We have the following result for Toeplitz operators corresponding to these symbols.
\begin{thm}
Let $\Phi(z) \in H_{\clb(\cle)}^{\infty}(\D)$ such that $\Phi(z): = \tau_W(z) = A+zBC$, such that $M_{\Phi}$ has a unitary part, then any one of the following conditions
\begin{enumerate}
\item[(i)] $\|T_{\Phi(0)}f\| = \|f\|$ for all $f \in \clm_{M_{\Phi}}$,
\item[(ii)] $\|T_{\Phi(0)}^*f\| = \|f\|$ for all $f \in \clm_{M_{\Phi}}$, and $\ker \Phi(0) \perp \clm_{M_{\Phi}}$,
\end{enumerate}
will imply that $\Phi(0)$ has a unitary part.
\end{thm}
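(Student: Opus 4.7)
The plan is to treat the two conditions separately. Case (i) is immediate: its hypothesis coincides with that of Theorem~\ref{analyticmainthm} (the assumption $\|\Phi\|_\infty\le 1$ is automatic since $\tau_W$ is a Schur function), so the conclusion follows by applying that theorem directly.

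For case (ii), I would begin with Theorem~\ref{mainthm}, which produces a Hilbert space $\clf$, an inner $\Theta\in H_{\clb(\clf,\cle)}^\infty(\D)$, and a unitary $U:\clf\to\clf$ with $\clm_{M_\Phi}=M_\Theta H_\clf^2(\D)$ and $\Phi(e^{it})\Theta(e^{it})=\Theta(e^{it})U$ a.e.\ on $\T$; analyticity of both sides extends this to $\Phi(\lambda)\Theta(\lambda)=\Theta(\lambda)U$ for every $\lambda\in\D$. The key functional-theoretic step is to translate $\|T_{\Phi(0)}^*f\|=\|f\|$ on $\clm_{M_\Phi}$ into a pointwise identity for $\Theta$. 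Since $\Phi(0)=A$ and property~(a) gives $I_\cle-AA^*=BB^*$, the hypothesis is equivalent to $T_{BB^*}|_{\clm_{M_\Phi}}=0$, and evaluating on constant $\clf$-valued inputs together with analyticity of $\Theta$ upgrades this to $B^*\Theta(\lambda)=0$ for every $\lambda\in\D$.

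The special form $D=0$ now does the work. Applying $B^*$ on the left of $(A+\lambda BC)\Theta(\lambda)=\Theta(\lambda)U$, and using $B^*A=0$ together with $B^*B=I_\clk$, collapses the identity to $\lambda C\Theta(\lambda)=0$, whence $C\Theta(\lambda)=0$ throughout $\D$ by analyticity. Combining $B^*\Theta\equiv 0$ and $C\Theta\equiv 0$ with $A^*A=I_\cle-C^*C$ gives $A^*A\Theta(\lambda)=\Theta(\lambda)$, and multiplying $A\Theta(\lambda)=\Theta(\lambda)U$ on the left by $A^*$ yields $A^*\Theta(\lambda)=\Theta(\lambda)U^*$. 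Since $B^*\Theta=0$ also annihilates the second summand in $\Phi(\lambda)^*=A^*+\bar\lambda C^*B^*$, I obtain
\[
\Phi(0)\Theta(\lambda)=\Theta(\lambda)U,\qquad\Phi(0)^*\Theta(\lambda)=\Theta(\lambda)U^*\qquad(\lambda\in\D),
\]
which is condition~(iii) of Proposition~\ref{analyticmainprop}; hence $\Phi(0)$ has a unitary part.

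The main obstacle, and where the structural assumption $D=0$ is essential, is the step $B^*\Theta\equiv 0\Longrightarrow C\Theta\equiv 0$, which hinges on $B^*B=I_\clk$; for a general transfer function the corresponding manipulation produces $(I_\cle-D^*D)(I-\lambda D)^{-1}C\Theta(\lambda)=0$ and requires extra invertibility of $I_\cle-D^*D$ to conclude. The assumption $\ker\Phi(0)\perp\clm_{M_\Phi}$ does not enter the above derivation explicitly; it encodes $\ran\Theta(0)\subseteq\overline{\ran A^*}$, a geometric consequence which is automatic from the identity $A^*\Theta(0)=\Theta(0)U^*$ produced by the argument and which would be the natural starting point for an alternative, more direct proof bypassing the transfer-function manipulation.
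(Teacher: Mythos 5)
Your proof is correct, and for condition (ii) it follows the same algebraic skeleton as the paper's own argument: apply $B^*$ on the left of the intertwining relation $(A+\lambda BC)\Theta(\lambda)=\Theta(\lambda)U$ supplied by Theorem \ref{mainthm}, use $B^*A=0$ and $B^*B=I_{\clk}$ to obtain $\lambda C\Theta(\lambda)=B^*\Theta(\lambda)U$, and then return to $\Phi(0)=A$ via Proposition \ref{analyticmainprop}. Two genuine differences are worth recording. First, you dispatch condition (i) by invoking Theorem \ref{analyticmainthm} directly, which is legitimate since $\tau_W$ is automatically a Schur function; the paper instead re-derives this case inside the special structure $D=0$, and your route is shorter without losing anything. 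Second, and more substantively, at the point $\lambda=0$ the paper asserts that $B^*\Theta(0)=0$ ``may not imply $C\Theta(0)=0$'' and introduces the hypothesis $\ker\Phi(0)\perp\clm_{M_{\Phi}}$ precisely to force $C\Theta(0)=0$; you instead observe that $C\Theta(\lambda)$ is analytic on $\D$ and vanishes on $\D\setminus\{0\}$, hence vanishes at $0$ as well. This is correct, and it actually proves a stronger statement: the first clause of condition (ii), $\|T_{\Phi(0)}^*f\|=\|f\|$ for all $f\in\clm_{M_{\Phi}}$, already implies that $\Phi(0)$ has a unitary part, so the orthogonality clause is redundant as a hypothesis. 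Your closing identification of $\ker\Phi(0)\perp\clm_{M_{\Phi}}$ with $C\Theta(0)=0$ (equivalently $\ran\Theta(0)\subseteq\overline{\ran A^*}$, which is automatic from $A^*\Theta(0)=\Theta(0)U^*$) agrees with the computation at the end of the paper's proof and correctly explains why that clause carries no independent information here.
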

\begin{proof}
If $M_{\Phi}$ has a unitary part, then from Theorem \ref{mainthm}, there exist a Hilbert space $\clf$ and an inner function $\Theta(z) \in H_{\clb(\clf,\cle)}^{\infty}(\D)$ such that 
\begin{equation}\label{p6}
(A + \lambda BC) \Theta(\lambda) = \Phi(\lambda) \Theta(\lambda) = \Theta(\lambda) U \quad (\forall \lambda \in \D),
\end{equation}
where $\clm_{T_{\Phi}} = M_{\Theta} H_{\clf}^2(\D)$.  Let us fix a $\lambda \in \D$. Now, acting on the left of both sides by $B^*$ and using condition $(b)$, we get
\[
\lambda B^*B C\Theta(\lambda) = B^* \Theta(\lambda)U
\] 
and thus using condition $(c)$ it implies that
\begin{equation}\label{sp3}
\lambda C\Theta(\lambda) = B^* \Theta(\lambda)U,
\end{equation}
Since $\lambda \in \D$ was arbitrary, the above identity for $\lambda \neq 0$ gives
\begin{equation}\label{p7}
C \Theta(\lambda) = 0 \text{ if and only if }  B^* \Theta(\lambda) = 0.
\end{equation}
For $\lambda = 0$, we get $B^* \Theta(0) = 0$, and therefore, $AA^* \Theta(0) = \Theta(0)$. Now, let us again go back to condition (\ref{p6}). Acting on the left  by $A^*$ and using condition $(b)$, we get
\begin{equation}\label{p4}
A^* A \Theta(\lambda) = A^* \Theta(\lambda)U.\end{equation}
Since $A$ is a partial isometry, acting on the left of both sides by $A$, we get
\begin{equation}\label{p5}
A A^* \Theta(\lambda) = A \Theta(\lambda)U^*.
\end{equation}
From Proposition \ref{analyticmainprop}, we know that $A = \Phi(0)$ has a unitary part if and only if 
\begin{equation}\label{p3}
A \Theta(\lambda) = \Theta(\lambda) U; \quad A^*\Theta(\lambda) = \Theta(\lambda) U^* \quad (\forall \lambda \in \D)
\end{equation}  
Since $\lambda \in \D$ was arbitrary, it follows from the observations (\ref{p4}) and (\ref{p5}) that the above conditions are true if and only if 
\[
A^* A \Theta(\lambda) = \Theta(\lambda); \quad AA^* \Theta(\lambda) = \Theta(\lambda) \quad (\forall \lambda \in \D).
\]
In turn, these conditions are equivalent to $T_A T_A^* M_{\Theta} = M_{\Theta}$ and $T_A^* T_A M_{\Theta} = M_{\Theta}$.  Using properties of the unitary $W$, we see that these conditions are again equivalent to $ T_B^* M_{\Theta} = 0$ and $T_C M_{\Theta} =  0$, respectively.  From condition (\ref{sp3}), it follows that $B^* \Theta(\lambda) = 0$ if and only if $C \Theta(\lambda) = 0$ for all $\lambda \in \D \setminus \{0\}$. Since $B^* \Theta(0) = 0$, we can deduce that $T_C M_{\Theta} = 0$ will imply $T_B^* M_{\Theta} = 0$ but not vice versa. This is because, $B^* \Theta(0) = 0$ may not imply $C \Theta(0) = 0$. So we need to find an extra condition along with $T_B^* M_{\Theta} = 0$ that forces $C \Theta(0) = 0$. We claim that this condition is equivalent to $\ker \Phi(0) \perp \clm_{M_{\Phi}}$. Since $A = \Phi(0)$ is a partial isometry this is again equivalent to 
\[
(I_{\cle}  - \Phi(0)^* \Phi(0)) \cle \perp \clm_{M_{\Phi}}.
\]
Now, note that for any $\eta \in \cle$ and $f \in H_{\clf}^2(\D)$,
\[
\langle (I_{\cle} - \Phi(0)^* \Phi(0)) \eta,  M_{\Theta} f \rangle = \langle (I_{\cle} - A^* A) \eta,  M_{\Theta} f \rangle = \langle M_{\Theta}^* C^*C  \eta,   f \rangle =  \langle  \Theta(0)^* C^* C  \eta,  f(0) \rangle.
\]
Thus, $\ker \Phi(0) \perp \clm_{M_{\Phi}}$ is equivalent to $ \Theta(0)^* C^* C = 0$. Now, $C^*$ is an isometry further implies that $ \Theta(0)^* C^* C = 0$ if and only if $ \Theta(0)^* C^* = 0$, that is, $C \Theta(0) = 0$. This completes the proof of the claim. So, we have proved that any one of the following conditions 
\[
T_{C} M_{\Theta} = 0,
\]
and 
\[
T_{B}^* M_{\Theta} = 0 \text{ and } \ker \Phi(0) \perp \clm_{M_{\Phi}},
\]
will imply the existence of the unitary part of $\Phi(0)$. Note that using the properties of unitary $W$, the above conditions are again equivalent to 
\[
 M_{\Theta}^* T_A^* T_A M_{\Theta} =  I_{H_{\cle}^2(\D)},
\]
and
\[
 M_{\Theta}^* T_A T_A^* M_{\Theta} =  I_{H_{\cle}^2(\D)} \text{ and }  \ker \Phi(0) \perp \clm_{M_{\Phi}},
\]
respectively. In turn, these are again equivalent to
\[
\|T_{\Phi(0)}f\| = \|f\|,
\]
and
\[
\|T_{\Phi(0)}^*f\| = \|f\| \text{ and }  \ker \Phi(0) \perp \clm_{M_{\Phi}},
\]
respectively. This completes the proof.
\end{proof}
%

\section{Concluding remarks and questions}\label{conclusion}
An important aspect of any c.n.u. contraction is that it admits a $H^{\infty}$-calculus (see \cite[Chapter 3]{NF}). We can deduce the following result from Theorem \ref{mainthm}.
\begin{thm}\label{cnu-toep}
Let $T_{\Phi}$ be a Toeplitz operator on $H_{\cle}^2(\D)$ such that any one of the following conditions hold, 
\begin{enumerate}
\item[(I)] $\Phi(z)$ is completely non-unitary on some subset of $\T$ with positive measure,
\item[(II)] $\Phi(z)$ has a unitary part a.e. on $\T$, but for any Hilbert space $\clf$, and unitary $U \in \clb(\clf)$, there does not exist any inner function $\Theta(z) \in H_{\clb(\clf, \cle)}^{\infty}(\D)$ satisfying \[
\Phi(z) \Theta(z) = \Theta(z) U; \quad \Phi(z)^* \Theta(z) = \Theta(z) U^* \quad (\text{a.e.  on } \T).
\]
\end{enumerate}
Then $T_{\Phi}$ is completely non-unitary and therefore, for any non-zero function $u \in H^{\infty}(\D)$ with $|u(z)|< 1$ for all $z \in \D$, the operator $u(T_{\Phi})$ exists and is a completely non-unitary contraction.
\end{thm}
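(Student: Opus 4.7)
The plan is to deduce the c.n.u.\ property of $T_\Phi$ from Theorem \ref{mainthm} by contraposition, and then derive the assertion about $u(T_\Phi)$ from the Sz.-Nagy--Foias $H^\infty$ functional calculus for c.n.u.\ contractions. Throughout I would use the standing assumption $\|T_\Phi\| \leq 1$ that makes Theorem \ref{mainthm} applicable.

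I would first assume $T_\Phi$ admits a non-zero unitary part $\clm_{T_\Phi}$ and derive a contradiction under each of (I) and (II). By Theorem \ref{mainthm}, this assumption yields a non-zero Hilbert space $\clf$, an inner function $\Theta \in H_{\clb(\clf,\cle)}^{\infty}(\D)$, and a unitary $U : \clf \raro \clf$ with
\[
\Phi(e^{it}) \Theta(e^{it}) = \Theta(e^{it}) U, \qquad \Phi(e^{it})^* \Theta(e^{it}) = \Theta(e^{it}) U^* \quad (\text{a.e.\ on } \T).
\]
Hypothesis (II) is precisely the non-existence of such a triple, so it is settled immediately. Under (I), iterating these two identities gives, for every $n \in \Nat$ and a.e.\ $e^{it} \in \T$,
\[
\Phi(e^{it})^n \Phi(e^{it})^{*n} \Theta(e^{it}) = \Theta(e^{it}) = \Phi(e^{it})^{*n} \Phi(e^{it})^n \Theta(e^{it}),
\]
forcing $\ran \Theta(e^{it})$ to lie inside the unitary part of $\Phi(e^{it})$ almost everywhere. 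On the positive-measure set $N \subseteq \T$ where $\Phi(e^{it})$ is c.n.u., this unitary part equals $\{0\}$, so $\Theta(e^{it}) = 0$ a.e.\ on $N$. But $\Theta$ inner means $\Theta(e^{it})$ is an isometry a.e.\ on $\T$, which forces $\clf = \{0\}$ --- contradicting $\clm_{T_\Phi} \neq \{0\}$.

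Having established that $T_\Phi$ is a c.n.u.\ contraction, I would invoke the Sz.-Nagy--Foias $H^\infty$ functional calculus \cite[Ch.~III]{NF}. Since $|u(z)| < 1$ on $\D$ gives $\|u\|_\infty \leq 1$, the operator $u(T_\Phi)$ is well defined with $\|u(T_\Phi)\| \leq 1$. That $u(T_\Phi)$ is again c.n.u.\ then follows from the standard Nagy--Foias fact that applying a non-unimodular-constant function $u \in H^\infty(\D)$ with $\|u\|_\infty \leq 1$ to a c.n.u.\ contraction yields a c.n.u.\ contraction; our $u$ (non-zero, with $|u| < 1$ on $\D$) is certainly not a unimodular constant.

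I expect the principal difficulty to be the last step --- rigorously justifying ``$T_\Phi$ c.n.u.\ $\Raro$ $u(T_\Phi)$ c.n.u.'' under the strict-pointwise hypothesis $|u(z)| < 1$ on $\D$, rather than the stronger uniform bound $\|u\|_\infty < 1$. Unlike the earlier contradiction arguments, this step does not use Theorem \ref{mainthm} at all but only the structural theory of c.n.u.\ contractions in \cite{NF}. Should a black-box appeal to that theory be deemed unsatisfactory, a hands-on alternative is to rule out a hypothetical unitary reducing subspace of $u(T_\Phi)$ via the spectral-mapping property of the $H^\infty$ calculus: any $\T$-spectrum of $u(T_\Phi)$ would have to be inherited from unitary behaviour of $T_\Phi$ on $\T$, which is absent because $T_\Phi$ itself is c.n.u.
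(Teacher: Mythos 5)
Your proposal is correct and follows the paper's intended route: the paper states Theorem \ref{cnu-toep} without a separate argument, as a direct corollary obtained by negating condition (ii) of Theorem \ref{mainthm} and then invoking the Sz.-Nagy--Foias $H^{\infty}$-calculus for c.n.u.\ contractions from \cite[Chapter 3]{NF}, which is exactly your contrapositive argument. The one detail worth keeping explicit is the step you already supply under (I): the boundary values of an inner $\Theta$ are isometries a.e., so their vanishing on a set of positive measure forces $\clf=\{0\}$ and hence a trivial unitary part.
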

Let us highlight several natural questions worthy of further investigation.

\begin{question}
Characterize analytic Toeplitz operators $M_{\Phi}$ on $H_{\cle}^2(\D)$ for which, $M_{\Phi}$ is completely non-unitary if and only if $\Phi(0)$ is complete non-unitary on $\cle$.
\end{question}

\begin{question}
Characterize Toeplitz operators $T_{\Phi}$ belonging to the class of $C_{\cdot 0}$-contractions.
\end{question}

\begin{question}
Characterize Toeplitz operators $T_{\Phi}$ belonging to the class of $C_{0}$-contractions.
\end{question}

The answer to the above questions is unknown even in the scalar-valued Hardy spaces. However, if we consider analytic Toeplitz operators on both scalar/vector-valued Hardy spaces, we have a definite answer on the characterization of $C_{\cdot 0}$ contractions (see \cite{CIL, Sarkar2}).
\section*{Acknowledgement}
The second named author would like to thank Jaydeb Sarkar for informing him about \cite[Theorem 3.6, Theorem 3.7]{RR}. The Department of Science and Technology supports the second named author via the INSPIRE Faculty fellowship IFA19-MA141.

\end{document}